\newtheorem{theorem}{Theorem}[section]
\newtheorem{lemma}[theorem]{Lemma}
\theoremstyle{definition}
\newtheorem{definition}[theorem]{Definition}
\newtheorem{example}[theorem]{Example}
\theoremstyle{remark}
\newtheorem{remark}[theorem]{Remark}
\begin{document}

\begin{frontmatter}
\title{Optimal Markovian coupling for finite activity L\'{e}vy processes}
\runtitle{Optimal Markovian coupling for finite activity L\'{e}vy processes}

\begin{aug}
\author[A]{\inits{W.S.}\fnms{Wilfrid S.}~\snm{Kendall}\ead[label=e1]{w.s.kendall@warwick.ac.uk}}
\author[B]{\inits{M.B.}\fnms{Mateusz B.}~\snm{Majka}\ead[label=e2]{m.majka@hw.ac.uk}}
\author[C]{\inits{A.}\fnms{Aleksandar}~\snm{Mijatovi\'{c}}\ead[label=e3]{a.mijatovic@warwick.ac.uk}}
\address[A]{Department of Statistics,
	University of Warwick, Coventry, CV4 7AL\printead[presep={,\ }]{e1}}

\address[B]{School of Mathematical and Computer Sciences, Heriot-Watt University, Edinburgh, EH14 4AS, UK\printead[presep={,\ }]{e2}}

\address[C]{Department of Statistics,
	University of Warwick, Coventry, CV4 7AL\printead[presep={,\ }]{e3}}
\end{aug}

\begin{abstract}
We study optimal Markovian couplings of Markov processes, 
where the optimality is understood in terms of minimization 
of concave transport costs 
between the time-marginal distributions of the coupled processes.
We provide explicit constructions of such optimal couplings for
one-dimensional finite-activity L\'{e}vy processes 
(continuous-time random walks)
whose jump distributions are unimodal but not necessarily symmetric. 
Remarkably, the optimal Markovian coupling does not depend on the specific concave transport cost. 
To this end, we combine McCann's results on optimal transport and Rogers' results on random walks
with a novel uniformization construction 
that allows us to characterize all Markovian couplings of finite-activity L\'{e}vy processes.
In particular, we show that the optimal Markovian coupling for 
finite-activity L\'{e}vy processes with non-symmetric unimodal L\'{e}vy measures 
has to allow for non-simultaneous jumps of the two coupled processes.
\end{abstract}

\begin{keyword}
\kwd{concave transport cost}
\kwd{continuous-time random walk}
\kwd{finite activity L\'{e}vy process}
\kwd{immersion coupling}
\kwd{L\'{e}vy process}
\kwd{Markovian coupling}
\kwd{maximal coupling}
\kwd{optimal coupling}
\kwd{simultaneous optimality}
\kwd{unimodal distribution}
\kwd{Wasserstein distance}
\end{keyword}

\end{frontmatter}

 \section{Introduction}\label{sec:introduction}
This paper considers Markovian 
couplings of a finite activity L{\'e}vy process
(``continuous-time random walk'') \(X\), started at \(0\) and \(a>0\) respectively.
We show that if the jump distribution is unimodal
then there is a unique Markovian coupling \(Z\) with \(Z_0=X_0+a\),
based on the anti-monotonic rearrangement construction described in
\cite{McCann-1999}, such that 
for any $t >0$, the expected cost
\(\mathbb{E}\left[\phi(X_t-Z_t)\right]\)
is simultaneously minimized for all bounded concave increasing cost functions \(\phi:[0,\infty)\to[0,\infty)\). 
If the jumps are symmetric then this optimal Markovian coupling can be viewed as a 
reflection coupling. 
However, our arguments apply to all unimodal non-symmetric jump distributions, even as extreme as exponential (where all jumps are non-negative). In the latter case the optimal Markovian coupling involves the processes jumping independently till crossover,
when they couple.

Recall that a \emph{coupling} of two probability measures $\mu_1$ and $\mu_2$, defined respectively 
on Polish measure spaces $(E_1, \mathcal{E}_1)$ and $(E_2, \mathcal{E}_2)$, is a 
probability measure $\mu$ defined on the product space $(E_1 \times E_2, 
\mathcal{E}_1 \otimes \mathcal{E}_2)$ with marginals $\mu_1$ and $\mu_2$. If we 
consider two canonical random variables $X$ and $Y$ on $(E_1, \mathcal{E}_1, 
\mu_1)$ and $(E_2, \mathcal{E}_2, \mu_2)$, respectively, then any coupling $\mu$ 
of $\mu_1$ and $\mu_2$ specifies a law of a random vector $(X',Y')$ on $(E_1 
\times E_2, \mathcal{E}_1 \otimes \mathcal{E}_2)$, which we also call a coupling 
of the random variables $X$ and $Y$. These definitions extend naturally to any 
finite number of measures or random variables \cite[Chapter 3, Section 3]{Thorisson-2000}. 
Furthermore, by considering random variables on $(E^I, 
\mathcal{E}^{\otimes I})$ for a time index set $I$, we can consider couplings of 
$(E, \mathcal{E})$-valued stochastic processes indexed by $I$ \cite[Chapter 4]{Thorisson-2000}.

The question ``what is
the best possible coupling?''
clearly depends on 
context and the chosen notion of optimality. In the context of 
couplings of stochastic processes, the most widespread criterion of optimality 
uses the notion of a \emph{maximal coupling}, which is a coupling that minimizes the 
meeting time of the two coupled processes started from different initial 
positions  \cite{Griffeath-1975, Pitman-1976, Goldstein-1978, 
BanerjeeKendall-2014, HsuSturm-2013, Bottcher-2017, JackaMijatovicSiraj-2014, 
JackaMijatovic-2015, ErnstKendallRobertsRosenthal-2019}. Through the classical 
coupling inequality \cite{Lindvall-1992}, the problem of minimizing the meeting 
time is closely related to the problem of obtaining sharp upper bounds on the 
total-variation distance between time-marginal distributions of the coupled 
processes.

In recent years couplings have found numerous novel applications to the 
problem of quantifying convergence rates of Markov processes in other 
probabilistic metrics, in particular the Wasserstein distances. This has been an 
active area of research in stochastic analysis \cite{Eberle-2016, 
EberleGuillinZimmer-2019, Majka-2017, Majka-2019, LuoWang-2019}, computational 
statistics \cite{EberleMajka-2019, MajkaMijatovicSzpruch-2020} and machine 
learning \cite{Cheng-2018, Cheng-2020}. These applications focus on 
different optimality criteria for couplings, related to the optimal transport 
interpretation of the Wasserstein distances \cite{Chen-1994, 
GangboMcCann-1996, LiangSchillingWang-2020}. To see this, recall that the 
$L^p$-Wasserstein distance is defined for any $p \geq 1$, and for any two 
probability measures $\mu_1$ and $\mu_2$ on a measure space $(E, \mathcal{E})$, 
by
\begin{equation}\label{eq:defWasserstein}
W_{\rho,p}(\mu_1,\mu_2) 
\quad=\quad
\left( \inf_{\pi \in \Pi(\mu_1,\mu_2)} \int_{E \times E} \rho(x,y)^p \pi (\operatorname{d}{x} \, \operatorname{d}{y}) \right)^{1/p} \,,
\end{equation} 
where $\rho$ is a metric on $(E, \mathcal{E})$ and $\Pi(\mu_1,\mu_2)$ is the set 
of all couplings of $\mu_1$ and $\mu_2$. By \eqref{eq:defWasserstein},
any coupling $(X_t, Y_t)_{t \geq 0}$ of two stochastic processes 
$(X_t)_{t \geq 0}$ and $(Y_t)_{t \geq 0}$ on $(E, \mathcal{E})$ provides an 
upper bound on the Wasserstein distance between their time-marginal 
distributions $\mathcal{L}(X_t)$ and $\mathcal{L}(Y_t)$ for any $t > 0$:
\begin{equation*}
W_{\rho,p}(\mathcal{L}(X_t),\mathcal{L}(Y_t)) 
\quad\leq\quad
\left( \mathbb{E} \left[ \rho(X_t, Y_t)^p\right] \right)^{1/p} \,.
\end{equation*}
Sharp upper bounds on such Wasserstein distances
arise by minimizing the quantity $\mathbb{E} \left[ \rho(X_t, 
Y_t)^p\right]$ over all processes $(X_t, Y_t)_{t \geq 0}$ from a given class of 
couplings. 
General results guarantee existence of the optimal 
coupling for lower semi-continuous metrics $\rho$ 
(\cite[Theorem 2.19]{Kellerer-1984} or \cite[Chapter 4]{Villani-2009});
however these results are 
non-constructive and hence 
do not yield
quantitative 
bounds on $W_{\rho,p}$. 
In this paper we 
discuss explicit constructions of
optimal 
couplings, 
forcing
us to consider a more specific setting and in 
particular to focus on a specific class of couplings. 
A natural requirement for 
the choice of such a class is that both the coupled processes
should respect 
an appropriate common
filtration. 
Definition \ref{defImmersion} below
formalizes this by introducing 
the notions of \emph{immersion couplings} and \emph{Markovian couplings}. 
We focus on a specific 
class of metrics $\rho(x,y) = \phi(|x-y|)$ based on continuous concave 
cost functions $\phi: \mathbb{R} \to \mathbb{R}$. 
In particular 
our results
cover 
metrics of the form $\rho(x,y) = |x-y|^p$ for $p \in (0,1)$.

The task of finding optimal immersion couplings is non-trivial, 
even for discrete-time processes, 
(in which case a general coupling can be specified 
by 
iteratively
coupling the transition (jump) distributions of two processes step-by-step). 
Rogers \cite{Rogers-1999} considered the problem of identifying 
the 
one-step optimal
coupling for one-dimensional random walks with unimodal jump 
distributions, and characterised this in terms of a joint 
cumulative distribution function:
he then showed the resulting random walk coupling 
was optimal 
(amongst Markovian couplings)
over all time
and for a wide range of bounded convex pay-off functions. 
The present paper
uses a different characterisation to generalize \cite{Rogers-1999} mildly to the case of 
one-dimensional Markov chains with non-symmetric unimodal 
jump 
 distributions perhaps with atoms at the modes. 
 This generalisation is useful when moving to continuous-time processes (Section \ref{sec:Levy}). 

For more general Markov 
processes with symmetric transition kernels, it seems to be established folklore 
that applying a symmetrized version of McCann's \cite{McCann-1999} anti-monotonic rearrangement (AMR) 
coupling is the preferred choice for concave costs, even in 
multi-dimensional isotropic settings \cite{Eberle-2016, EberleMajka-2019, 
Majka-2017, Cheng-2018}. However, optimality
has been established only for (one-dimensional) probability measures
\cite{McCann-1999}, not in the context of general
stochastic processes. 
If the transition kernels are not symmetric then the general case 
becomes very complicated and no general optimality results have been 
considered. Without imposing any geometric conditions on transition 
probabilities, the only natural solution for a step-by-step coupling of 
stochastic processes seems to be to preserve the common mass of their jump 
measures and to couple the remaining mass independently: the 
\emph{basic coupling} \cite[Example 2.10]{Chen-2005}. Luo and Wang
\cite{LuoWang-2019} proposed 
the \emph{refined basic coupling},
preserving only half of the common mass of the jump measures, and coupling 
the remainder 
(the non-common mass) 
synchronously rather than independently, but imposing a 
different transformation on the second half of the common mass. 
This 
construction is typically
non-optimal but permits sharp upper bounds on the Wasserstein 
distances between time-marginals of various stochastic processes 
\cite{LiangWang-2020, LiangSchillingWang-2020, LiangMajkaWang-2019, 
HuangMajkaWang-2022} without needing geometric conditions on 
transition distributions. Determining the optimal 
coupling in general seems too ambitious a goal for now. 

Even in settings where an optimal coupling is available for given probability 
measures, constructing an optimal coupling for stochastic processes having such 
measures as their jump distributions can be highly non-trivial, especially in 
continuous-time. A significant part of the problem is the need to work with 
general characterisations of all possible Markovian couplings of stochastic 
processes from a given class. Such characterisations have been used in coupling 
diffusions or birth-death processes \cite{Chen-1994b}, but have not yet been 
exploited for general pure jump L\'{e}vy processes
\cite[discussion in Section 4]{LiangSchillingWang-2020}.

The present paper discusses explicit 
constructions of optimal Markovian couplings 
in the case of one-dimensional discrete-time 
Markov chains and continuous-time finite-activity L\'{e}vy processes whose jump 
distributions are unimodal (but not necessarily symmetric). To this end, 
Section~\ref{sec:rw} revisits results from \cite{McCann-1999} and 
\cite{Rogers-1999} to provide a new construction of McCann's AMR coupling, via 
the Skorokhod representation. Methods from \cite{Rogers-1999} 
establish optimality for AMR, first for couplings of individual random variables and then for Markov chains (Theorem \ref{thm:optimality}). 
Section \ref{sec:Levy} then introduces a novel uniformization construction for 
immersion couplings of finite activity L\'{e}vy processes (Theorem 
\ref{thm:uniformization}), permitting application of modified 
arguments from Section \ref{sec:rw} in the case of continuous time. The final 
result, Theorem \ref{thm:LevyOptimality}, constructs an optimal 
coupling of finite-activity L\'{e}vy processes with a unimodal L\'{e}vy measure 
$\nu$ (i.e., with jumps distributed according to $\nu/\nu(\mathbb{R})$), 
using step-by-step application of the AMR coupling to the processes' 
counterparts with jump measures $\frac{1}{2\nu(\mathbb{R})}\nu + 
\frac{1}{2}\delta_0$. This final observation leads to a conclusion that, for 
optimal couplings of finite-activity L\'{e}vy processes with non-symmetric 
unimodal L\'{e}vy measures, some non-zero jumps of the first coupled process 
may
correspond to zero jumps of the second, and vice versa; so optimally coupled
processes need not jump simultaneously.

\section{Optimal Markovian coupling for Markov chains}\label{sec:rw}

As indicated in the introduction, we need to consider couplings which simultaneously respect a common
filtration to which both random processes are adapted, but such that the coupling construction does not ``look into the future'' \cite[Section 1]{BurdzyKendall-2000}.  
We use the expressive and concise language of martingales:
\begin{definition}[Immersion and Markovian couplings for random processes]\label{defImmersion}
	Consider two random processes $X$ and $Y$ taking values in a Polish 
	space \((E,\mathcal{E})\),
	defined on the same probability space \((\Omega,\mathcal{F},\mathbb{P})\) 
	and adapted to 
	a common filtration of \(\sigma\)-algebras \(\{\mathcal{F}_t:t\geq0\}\).
	We suppose 
	here and throughout that
	the filtration is
	regularized, in the sense that
	\(\mathcal{F}_t=\bigcap_{s>t}\mathcal{F}_s\)
	and moreover
	for all \(t\geq0\) the \(\sigma\)-algebra \(\mathcal{F}_t\) contains all \(\mathbb{P}\)-null sets. 
	We say that $(X,Y)$
	is an \emph{immersion coupling} (synonyms: 
	\emph{faithful coupling} \cite{Rosenthal-1997},
	\emph{co-adapted coupling} 
	 \cite{ConnorJacka-2008,Connor-2013}) if the following holds:
	All martingales for the natural filtration of $X$ are also martingales in 
	the common filtration \(\{\mathcal{F}_t:t\geq0\}\),
	similarly all martingales for the natural filtration of $Y$ are also 
	martingales in the common filtration \(\{\mathcal{F}_t:t\geq0\}\).
	If additionally \((X,Y)\) is Markovian in the common filtration
	then we say \((X,Y)\) is a \emph{Markovian coupling}. 
\end{definition}

Strictly we should speak of Markovian \emph{immersion} couplings: however the abbreviated term ``Markovian coupling'' is now established in the literature. 
For optimal couplings the distinction between Markovian and immersion is not major:
optimal immersion couplings will often be Markovian by virtue of maximizing
the optimality criterion at each step of time (see the proof of Theorem \ref{thm:optimality} below).
For further discussion 
see 
\cite{Kendall-2013a, BanerjeeKendall-2018}. 

When considering optimality , the advantage of restricting to
the classes of immersion or Markovian couplings, 
since these typically 
permit explicit constructions
using tools such as stochastic 
calculus.
For this reason such couplings are currently widely used in the stochastic 
analysis / machine learning literature when investigating convergence rates of 
Markov processes.
In the following
we seek couplings which are optimal among the class of immersion or Markovian couplings.

We work with the following notion.
\begin{definition}[Optimal (immersion) Coupling]
	Consider an immersion coupling of two Markov processes
	$X$ and $Y$ taking values in a Polish space \((E,\mathcal{E})\). We say that this is an
	\emph{optimal (immersion) coupling}
	(for a specified loss function \(\phi:E\times E\to[0,\infty)\) vanishing on the diagonal)
	if
	\[
	\mathbb{E}\left[ {\phi(X(t+s),Y(t+s)) \;\Big|\; X(s), Y(s)} \right]
	\]
	is minimized amongst all immersion couplings
	of $X$ and $Y$
	for all \(t, s\geq 0\) and all possible \(X(s)\), \(Y(s)\).
In a mild abuse of terminology, if the minimizing coupling is Markovian
we call it an \emph{optimal Markovian coupling}.
\end{definition}
Generally the optimal coupling depends greatly on the specific function $\phi$ being considered.
We consider the special case
for
which the Polish state space is the real line \(\mathbb{R}\),
and we require that optimality holds simultaneously for \emph{all choices}
of \(\phi(x,y)=\phi(|x-y|)\) for concave continuous \(\phi:[0,\infty)\to[0,\infty)\) vanishing
at zero.
Note that \cite{McCann-1999} considers this type of simultaneous convex-cost optimality for couplings of probability measures. 
Note also that $\phi(x,y)=|x-y|^p$ is a rather unsatisfactory cost when $p \geq 1$: 
if $p = 1$ then too many couplings are optimal \cite[Theorem 3.8]{Chen-1994}; 
for $p > 1$ the optimal coupling is synchronous
[\citealp[Thm~3.7]{GangboMcCann-1996}; \citealp[Thm~1.1]{JackaMijatovic-2015}]. 
The concave case ($0 < p < 1$) is more interesting.  

The concept of an anti-monotonic rearrangement coupling of probability measures
\cite{McCann-1999} 
is fundamental for our optimality proofs. 
A useful tool for this is the Skorokhod representation.

\subsection{Skorokhod representation}\label{sec:skorokhod}
Recall that the Skorokhod representation
generates a copy of 
a given scalar random variable \(X\) using
its distribution function \(F(x)=\mathbb{P}[X\leq x]\)
and a Uniform\((0,1)\) random variable \(U\).
We use the fact that distribution functions are themselves c\`{a}dl\`{a}g 
(\underline{c}ontinue \underline{\`{a}} \underline{d}roite, \underline{l}imite \underline{\`{a}} \underline{g}auche), 
i.e., they are right-continuous with left limits. 
The functional inverse of \(F\) need not exist; 
adequate (though non-unique) substitute inverses can be defined as follows
\cite[Section 3.12, for example]{Williams-1991}:
\begin{align}
F^{-1,+}(z) \quad&=\quad \inf\{x: F(x) >   z\}
\quad =\quad \sup\{x: F(x)\leq z\}\,,
\label{eq:skorokhod}\\
F^{-1,-}(z) \quad&=\quad \inf\{x: F(x)\geq z\} 
\quad =\quad \sup\{x: F(x) <   z\}\,.
\nonumber
\end{align}
Set \(X^+=F^{-1,+}(U)\) and \(X^-=F^{-1,-}(U)\).
Then \(\mathbb{P}[X^+\neq X^-]=0\) and
\(\mathbb{P}[X^-\leq x]=\mathbb{P}[X^+\leq x]=F(x)\).

Consequently a copy of \(X\) can be generated using 
a substitute inverse of 
the cumulative distribution function \(F\) as above,
and it does not matter which of the possible 
substitute inverses is employed.
Indeed, if $F$ is continuous then
direct arguments show that
$F(X^+)=F(X^-)=U$.

\subsection{McCann's anti-monotonic rearrangement (AMR) coupling}\label{sec:mccann}
McCann \cite{McCann-1999} discusses an \emph{anti-monotonic rearrangement coupling} (AMR coupling)
between two distribution functions \(F_1\)
and \(F_2\) defined
on the real line. 
If $X$ and $Y$ are the random variables 
implementing the coupling, then
(conditional on \(X\neq Y\))
the random variables \(X\) and \(Y\) are
antithetically
coupled,
in the sense that \(X\) and \(Y\) are
negatively associated
(the informal notion of
``antithetic coupling'' corresponds to the notion of ``antithetic simulation''
\cite{HammersleyMorton-1956}).

We consider only the particular case, in which \(F_2\) 
stochastically dominates \(F_1\):
\[
F_1(x) \quad\geq\quad F_2(x) \qquad \text{for all }x \in \mathbb{R}\,.
\]
This stochastic domination is related to a condition of McCann \cite[Proposition 2.12]{McCann-1999}, 
where one of two measures is required to vanish on a connected set, while the 
other vanishes on its complement, after removing their common mass (the 
difference is that McCann works on the circle, while we work on the line).  
We further require that
for each value \(\ell\geq0\) the super-level set
\begin{equation}\label{eq:superlevel}
S_\ell\quad=\quad\{x\in\mathbb{R}: F_1(x)-F_2(x)\geq\ell\}
\end{equation}
should be either connected (hence an interval) or empty: 
elementary properties of distribution functions
show that the interval
\(S_\ell\) is non-increasing in \(\ell\) 
and is bounded for positive \(\ell\).
Note that the connectedness assumption will be satisfied for our main example, 
where $F_1$ corresponds to a (weakly) unimodal probability distribution, and 
$F_2$ is a translation of $F_1$, cf.\ Section \ref{sec:unimodal}.  
\noindent
It follows that we can find \(\zeta\) such that 
\(\zeta\in S_\ell\)
for all \(\ell<\ell_\text{max}=\sup_{x \in \mathbb{R}}(F_1(x)-F_2(x))\).
Indeed, since \(S_\ell\) is connected and bounded,
and must be non-empty if \(\ell<\ell_\text{max}\),
removing \( S_\ell\) from \(\mathbb{R}\) leaves a complement consisting of 
 two (disjoint)
connected components:
\(\mathbb{R}\setminus S_\ell\
=C^1_\ell\sqcup C^2_\ell\).
Both components, being connected, are half-infinite (closed or open)
intervals. 
We define disjoint half-infinite intervals 
\({C^1}\) and \({C^2}\) by
\({C^1}=
\cup_{\ell< \ell_\text{max}} C^1_\ell\)
and
\({C^2}=
\cup_{\ell<
	\ell_\text{max}} C^2_\ell\).
Choose \(\zeta\) to be the right end-point of
\({C^1}\): 
note that \({C^1}\subseteq(-\infty,\zeta]\)
and \({C^2}\subseteq[\zeta,\infty)\).
However, we note that equality cannot hold for both these inclusions,
and may not hold in either.

The interval \(S_\ell\) 
is non-increasing in \(\ell\),
so
the difference \(F_1(x)-F_2(x)\)
must be non-decreasing when \(x<\zeta\) 
and 
non-increasing when \(x\geq\zeta\). 
Note that \(F_1\) and \(F_2\) are c\'{a}dl\'{a}g (so we can include
\(x=\zeta\) in the non-increasing range),
but need not be continuous, so we cannot assume that 
\(F_1(\zeta)-F_2(\zeta)=\ell_\text{max}\).

It is now convenient to switch to the language of probability
measures. 
Recall that if \(\mu_1\) and \(\mu_2\) are two non-negative measures then their
\emph{meet}
is defined by
\[
(\mu_1 \wedge\mu_2)(A)\quad=\quad \sup\{\mu_1(B)\wedge\mu_2(B)\;:\; B\subseteq A, 
B\in\mathcal{E}\}\,.
\]
Consider 
the Hahn-Jordan decomposition for the signed measure
\(\mu_1-\mu_2=p \nu_1-p \nu_2\), where \(\mu_1\) is the probability 
measure corresponding to \(F_1\) and \(\mu_2\) is the probability 
measure corresponding to \(F_2\).
Here $1-p$ is the total mass of $\mu_1\wedge\mu_2$, while
\begin{equation*}
p\nu_1 \quad=\quad \mu_1-(\mu_1\wedge\mu_2)\,, \qquad \text{ and } \qquad 
p\nu_2 \quad=\quad \mu_2-(\mu_1\wedge\mu_2)\,.
\end{equation*}
Thus \(p\in[0,1]\) while \(\nu_1\) and \(\nu_2\) 
are probability measures of disjoint support. 
The stochastic domination together with
connectedness of 
the super-level sets \eqref{eq:superlevel} implies
that the non-negative measures \(p \nu_1\) and \(p \nu_2\) 
of the Hahn-Jordan decomposition are supported on the
disjoint half-infinite intervals \(C^1\) and \(C^2\).

Define a third probability measure \(\nu^*\) by \((1-p)\nu^*=\mu_2\wedge\mu_1=\mu_2-p\nu_2=\mu_1-p\nu_1\).
By stochastic domination,
\[
(1-p)\nu^*((-\infty,x])\quad=\quad F_1(x) - F_2(x)
\qquad\text{ for all }x \in \mathbb{R} \,.
\]

McCann's 
AMR coupling
is a maximal coupling of 
random variables \(X\) and \(Y\) with distribution functions
\(F_1\) and \(F_2\),
implementing
the representation implied by the Hahn-Jordan decomposition
\begin{align*}
\mu_1 \quad&=\quad (1-p)\nu^* + p\nu_1\,,
\\
\mu_2 \quad&=\quad (1-p)\nu^* + p\nu_2
\end{align*}
in an anti-monotonic manner,
using a single Uniform\((0,1)\) random variable \(U\)
as follows:
\begin{enumerate}
	\item If \(U\geq p\) then use the Skorokhod representation
	(and the notation of \eqref{eq:skorokhod}) to generate 
	\begin{equation}\label{eq:skorokhod-def1}
	X \quad=\quad Y \quad=\quad(G^*)^{-1,+}((U-p)/(1-p))    
	\end{equation}
	where
	\((G^*)^{-1,+}\) is the substitute inverse of the distribution function \(G^*\) determined by
	\(G^*(x)=\nu^*((-\infty,x])\) for all \(x \in \mathbb{R} \);
	\item If on the other hand \(U<p\)
	then use the Skorokhod representation twice over
	to generate
	\begin{align}\label{eq:skorokhod-construction}
	X\quad&=\quad(G_1)^{-1,+}(U/p)\,,
	\\\nonumber
	Y\quad&=\quad(G_2)^{-1,+}((p-U)/p)\,,
	\end{align}
	where
	\((G_i)^{-1,+}\) for $i \in \{ 1, 2 \}$ are the substitute inverses of the distribution functions \(G_i\) 
	which are determined by
	\(G_i(x)=\nu_i((-\infty,x])\) for all \(x \in \mathbb{R} \);
\end{enumerate}
Note that instead of \((G^*)^{-1,+}\), $(G_1)^{-1,+}$ and $(G_2)^{-1,+}$, we 
could have used \((G^*)^{-1,-}\), $(G_1)^{-1,-}$ and $(G_2)^{-1,-}$, 
respectively. Hence the AMR coupling depends on the choice of the 
substitute inverse used in the Skorokhod representation, as explained in Section 
\ref{sec:skorokhod}. 
If \(X\neq Y\) then
\eqref{eq:skorokhod-construction} delivers
an anti-monotonic relationship between
\(X\) and \(Y\) \emph{via} \(U\): as \(U\)
increases it is the case that \(X\) is
non-decreasing and \(Y\) is non-increasing.
However in general we cannot infer that \(Y\)
is an anti-monotonic \emph{function} of \(X\);
if \(G_1\) or \(G_2\) have jumps then this
may not be the case.

In the case where
\(F_1\) and \(F_2\)
have densities (\(f_1\) and \(f_2\) respectively)
then McCann's AMR coupling
admits a more direct description.
In that case 
\(F_1\) and \(F_2\) are continuous, while
\(C^1_\ell\) and
\(C^2_\ell\) are half-infinite closed intervals 
with \(\zeta\) defined as above.
Moreover
we have \({C^1}=(-\infty,\zeta)\)
and \({C^2}\subseteq(\zeta,\infty)\),
while \(F_1(\zeta)-F_2(\zeta)=\ell_\text{max}\)
by continuity of \(F_1\) and \(F_2\).

In this case of densities, the coupled copy \(Y\) 
may then be constructed in terms of \(X\) and a different 
auxiliary random variable \(U\), 
with \(U\) being Uniform\((0,1)\) independent of \(X\).
Set
\[
Y \quad=\quad
\begin{cases}
\rho_{F_1,F_2}(X) & \text{ when }U\geq(f_2(x)\wedge f_1(x))/f_1(x)\,,
\\
X            & \text{ when }U < (f_2(x)\wedge f_1(x))/f_1(x)\,.
\end{cases}
\]
Here \(\rho_{F_1,F_2}: C^1 \to C^2 \) is the 
\emph{anti-monotonic rearrangement function},
defined for \(x\in {C^1}\) by 
\begin{equation}\label{eq:AMR-density}
\rho_{F_1,F_2}(x)
\quad=\quad
\inf\left\{\rho\;:\;
\int_{\rho}^\infty (f_2(t)-f_1(t)) \operatorname{d}{t}
=
\int_{-\infty}^x (f_1(t)-f_2(t)) \operatorname{d}{t}
\right\}\,.
\end{equation}
Since \(F_1\) and \(F_2\) are continuous,
it follows that 
\begin{equation}\label{eq:defZeta}
\zeta \in 
\{x \in \mathbb{R}\;:\;  F_1(x)-F_2(x)=\ell_\text{max} \} \neq 
\emptyset
\,.
\end{equation}
Moreover,
if \(\zeta\in\{x:F_1(x)-F_2(x) = \ell_\text{max}\}\)
then from \eqref{eq:AMR-density} we may deduce that
\[
\int_{-\infty}^\zeta (f_1(t)-f_2(t)) \operatorname{d}{t}
\;=\;
F_1(\zeta)-F_2(\zeta)
\;=\; 
(1-F_2(\zeta))-(1-F_1(\zeta))
\;=\;
\int_\zeta^\infty (f_2(t)-f_1(t)) \operatorname{d}{t}\,.
\]
Furthermore, \(\int_u^\zeta (f_2(t)-f_1(t)) \operatorname{d}{t}\)
is always negative when \(u<\zeta\), since then
\(u\in C^1_\ell\) for some \(\ell<\ell_\text{max}\).
Therefore \(\zeta=\rho_{F_1,F_2}(\zeta)\).

\begin{figure}
	\includegraphics{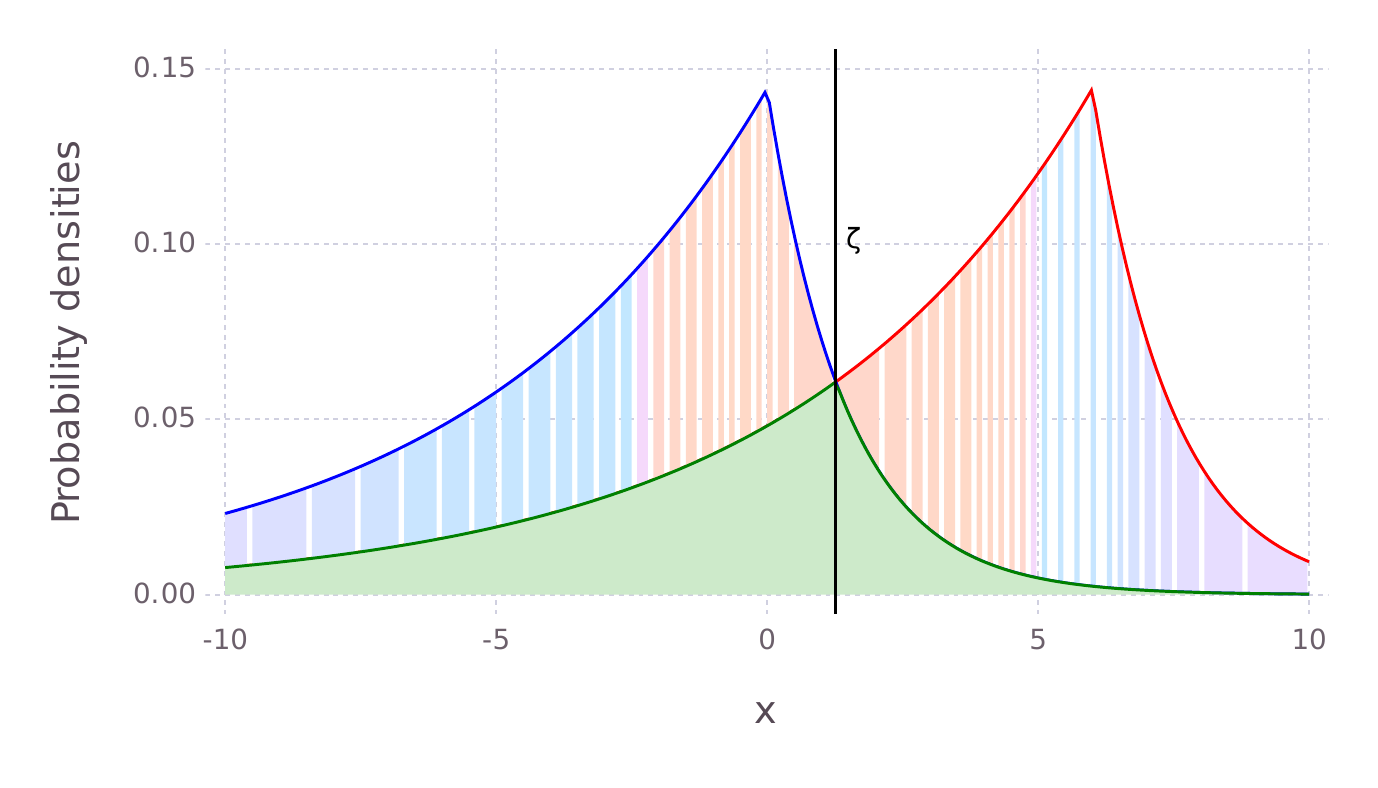}
	\caption{Anti-monotonic rearrangement. 
		In the two uncoupled regions (left and right) the  banding indicates the use of 
		 anti-monotonic rearrangement to map vertical strips in the left region to vertical strips in the right region and \emph{vice versa}.
		\label{fig:AMR-1}}
\end{figure}

\subsection{Unimodality and the measure-majorization of shifts}\label{sec:unimodal}

We plan to apply AMR to produce optimal Markovian couplings of two copies of a 
Markov chain $(X_k)_{k=0}^{\infty}$ started from two different initial points. 
To this end, we will need to couple transition probabilities of 
$(X_k)_{k=0}^{\infty}$ with their respective counterparts shifted by $a$ for any 
$a > 0$. Note that this corresponds to coupling jump distributions of two 
processes with different starting points (separated by $a > 0$). 
To prepare for this,
we 
discuss a general definition of (weak) unimodality which makes no explicit reference to probability densities.

\begin{theorem}\label{thm:unimodal-prob}
	Suppose a distribution function \(F\) 
	corresponds to a random variable \(X\)
	such that for all \(x>0\) there exists \(a(x)\in(0,x)\) such that
	\begin{align*}
	\mathbb{P}[X+x \in E]
	\quad&\geq\quad 
	\mathbb{P}[X \in E] \qquad\text{ for measurable } E\subset [a(x),\infty)\,,
	\\
	\mathbb{P}[X-x \in E]
	\quad&\geq\quad 
	\mathbb{P}[X \in E] \qquad\text{ for measurable } E\subset (-\infty,-a(x)]\,.
	\end{align*} 
	Then the probability law corresponding to \(F\)
	can be expressed
	as a mixture of (a) a probability density \(f\) which is (weakly) unimodal
	at \(0\) and (b) a Dirac mass at $0$.
\end{theorem}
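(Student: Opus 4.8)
The plan is to argue directly with the law $\mu=\mathcal{L}(X)$, showing that its restriction to $\mathbb{R}\setminus\{0\}$ is absolutely continuous with a density that is non-decreasing on $(-\infty,0)$ and non-increasing on $(0,\infty)$; any mass not captured this way necessarily sits at $0$ and furnishes the Dirac component.

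First I would recast the hypotheses measure-theoretically. Substituting $E=A+x$ in the first inequality and $E=A-x$ in the second, and using that $[0,\infty)\subseteq[a(x)-x,\infty)$ and $(-\infty,0]\subseteq(-\infty,x-a(x)]$ for every $x>0$ no matter what $a(x)\in(0,x)$ equals, the hypotheses give: for all $x>0$, $\mu(A)\geq\mu(A+x)$ for every Borel $A\subseteq[0,\infty)$, and $\mu(A)\geq\mu(A-x)$ for every Borel $A\subseteq(-\infty,0]$. Taking $A=[s,s+h]$ shows that, for each fixed $h>0$, the increment $s\mapsto\mu([s,s+h])$ is non-increasing on $[0,\infty)$ and non-decreasing on $(-\infty,-h]$.

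The step I expect to require genuine care is the exclusion of atoms away from $0$. Suppose $\mu(\{z\})=m>0$ with $z>0$, and fix $h\in(0,z)$. Since $z\in[s,s+h]$ for all $s\in[z-h,z]$, monotonicity of $s\mapsto\mu([s,s+h])$ promotes the bound $\mu([s,s+h])\geq m$ on $[z-h,z]$ to $\mu([s,s+h])\geq m$ for every $s\in[0,z]$. Hence, with $n=\lceil z/h\rceil$, each of the intervals $[jh,(j+1)h]$, $j=0,\dots,n-1$, has $\mu$-mass at least $m$, so these masses sum to at least $nm$; but consecutive intervals meet only in the single points $h,2h,\dots,(n-1)h$, so inclusion--exclusion gives $\sum_{j=0}^{n-1}\mu([jh,(j+1)h])=\mu([0,nh])+\sum_{j=1}^{n-1}\mu(\{jh\})\leq 2$. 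Thus $n\leq 2/m$, contradicting $\lceil z/h\rceil\to\infty$ as $h\downarrow0$. The mirror-image argument using the second inequality rules out atoms in $(-\infty,0)$, so $0$ is the only possible atom of $\mu$; set $c:=\mu(\{0\})\in[0,1]$.

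Finally I would upgrade monotone increments to absolute continuity. On $(0,\infty)$ there are no atoms, so $F$ is continuous and $\mu([s,s+h])=F(s+h)-F(s)$; the fact that this is non-increasing in $s$ for every $h>0$ forces $F$ to be midpoint-concave on $(0,\infty)$ --- compare its increments at $s=u$ and $s=(u+v)/2$ for $0<u<v$ --- hence concave there by continuity. A concave function is locally Lipschitz, differentiable a.e.\ with non-increasing derivative $f:=F'$, and recovers its increments by integration; letting $u\downarrow0$, $v\uparrow\infty$ and invoking right-continuity of $F$ at $0$ gives $\mu((0,\infty))=\int_0^\infty f$, so $\mu$ restricted to $(0,\infty)$ is absolutely continuous with non-increasing density $f$. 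Symmetrically the second inequality makes $F$ convex on $(-\infty,0)$, whence $\mu$ restricted to $(-\infty,0)$ is absolutely continuous with a non-decreasing density. Gluing the two pieces into one function $f$ on $\mathbb{R}$ (taking, say, $f(0):=\max\{f(0^-),f(0^+)\}$) yields a function that is (weakly) unimodal at $0$, and $\mu=c\,\delta_0+(1-c)\,\mu_f$, where $\mu_f$ has density $f/(1-c)$ when $c<1$ and the absolutely continuous term simply drops out when $c=1$ --- the asserted mixture.
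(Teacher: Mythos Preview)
Your proof is correct, and it takes a genuinely different route from the paper's.

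The paper reformulates the statement as a result about general non-negative measures (Theorem~\ref{thm:unimodal-measure}) and then proceeds in two separate stages. First it establishes absolute continuity on $(0,\infty)$ by a smoothing trick: convolving $\nu$ with a Uniform$(0,H)$ law produces an absolutely continuous measure which, by the majorization hypothesis, dominates $\nu$ on sets bounded away from~$0$; hence $\nu$ itself has a density there. Second, and independently, it shows the density is non-increasing by introducing $\bar f(x)=\operatorname{ess\,inf}\{f(u):0<u<x\}$ and proving $f=\bar f$ almost everywhere via a fairly delicate interval-partition argument.

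You instead notice that the hypothesis, after the substitution $E=A+x$ and the observation $a(x)-x<0$, collapses to the clean statement that $\mu(A)\geq\mu(A+x)$ for all $A\subseteq[0,\infty)$ and $x>0$ (and symmetrically on the left). From there you dispose of atoms off~$0$ by a direct packing argument and then read off \emph{both} absolute continuity \emph{and} monotonicity of the density in one stroke, since ``increments $F(s+h)-F(s)$ non-increasing in $s$ for every $h$'' is exactly concavity of $F$ on $(0,\infty)$, and a continuous concave function is automatically locally absolutely continuous with non-increasing a.e.\ derivative. This is more elementary---no convolution, no ess-inf construction---and the two conclusions the paper treats separately fall out together. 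The paper's smoothing argument, on the other hand, makes no use of the simplification $a(x)<x$ beyond controlling which sets it applies to, and its structure may transfer more readily to settings without an ambient order (where ``concavity of the distribution function'' has no meaning), but for the real line your argument is the shorter one.
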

Here ``weakly unimodal'' at $y \in \mathbb{R}$ means that there is a version of 
the density \(f\) which is non-decreasing on \((-\infty,y)\)
and non-increasing on \((y,\infty)\).

It is convenient to prove a measure-theoretic version of Theorem \ref{thm:unimodal-prob}. 
Consider two measures $\mu_1$ and $\mu_2$ defined on a $\sigma$-algebra $\mathcal{A}$.
For any measurable $G \subset \mathbb{R}$, 
the notation $\mu_1|_G \geq \mu_2|_G$ means
that $\mu_1(A) \geq \mu_2(A)$ for any $A \in \mathcal{A}$ such that $A \subset G$. 
For $x \in \mathbb{R}$  
let $\delta_x$ denote the Dirac mass at $x$, so that the convolution measure $\delta_x * \nu$ is the shift of $\nu$ by $x$.
\begin{theorem}\label{thm:unimodal-measure}
	Suppose a non-negative measure \(\nu\) on $\mathbb{R}$ 
	satisfies the following majorization relationship:
	for all \(x>0\) there exists \(a(x)\in (0, x)\)
	such that 
	\begin{align}\label{eq:unimodal-measure}
	(\delta_x * \nu)|_{[a(x),\infty)} 
	\quad&\geq\quad \nu|_{[a(x),\infty)}\,,
	\\\nonumber
	(\delta_{-x} * \nu)|_{(-\infty,-a(x)]} 
	\quad&\geq\quad \nu|_{(-\infty,-a(x)]} \,.
	\end{align}
	Then \(\nu\) 
	has a density \(f\) on \(\mathbb{R}\setminus\{0\}\),
	which can be chosen to be
	c\`{a}dl\`{a}g non-decreasing on \((-\infty,0)\)
	and c\`{a}gl\`{a}d non-increasing on \((0,\infty)\). We say that (the density component of) $\nu$ is weakly unimodal.
\end{theorem}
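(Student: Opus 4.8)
The plan is to reduce the two majorization hypotheses of \eqref{eq:unimodal-measure} to a single elementary monotonicity–under–translation property, and then to recognise that property as midpoint convexity of suitable tail functions, from which the density and its monotonicity fall out by standard convex–function theory.

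First I would reformulate the hypotheses. Fixing $x>0$ and writing $A=B+x$, the identity $(\delta_x*\nu)(A)=\nu(A-x)$ turns the first line of \eqref{eq:unimodal-measure} into $\nu(B)\ge\nu(B+x)$ for every measurable $B\subseteq[a(x)-x,\infty)$; since $a(x)\in(0,x)$ we have $a(x)-x<0$, so
\[
\nu(B)\ \ge\ \nu(B+x)\qquad\text{for every measurable }B\subseteq[0,\infty)\text{ and every }x>0.
\]
Symmetrically, using $(\delta_{-x}*\nu)(A)=\nu(A+x)$ and $x-a(x)>0$, the second line yields $\nu(B)\ge\nu(B-x)$ for every measurable $B\subseteq(-\infty,0]$ and every $x>0$. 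I would take for granted — it is automatic for the finite L\'{e}vy measures of Section~\ref{sec:Levy} — that $\nu$ is finite on compact subsets of $\mathbb{R}\setminus\{0\}$.

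Next, fixing $M>0$, I would introduce the tail functions $V(t)=\nu((t,M])$ on $[0,M]$ and $W(t)=\nu((-M,t])$ on $[-M,0]$, which are monotone and right-continuous. Taking $B=(s,t]$ above and specialising to $t=s+x$ (respectively $s=t-x$) converts the translation inequalities into the second-difference inequalities $V(s)+V(s+2x)\ge 2V(s+x)$ and $W(t-2x)+W(t)\ge 2W(t-x)$, i.e.\ $V$ is midpoint convex on $(0,M)$ and $W$ is midpoint convex on $(-M,0)$. A midpoint-convex function that is locally bounded on an open interval — in particular a monotone one — is continuous there, hence convex, and therefore absolutely continuous on compact subintervals. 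Reading off the density is then routine: $\nu((s,t])=V(s)-V(t)=\int_s^t f$ for $0<s<t$ with $f=-V'\ge0$ non-increasing (choosing $f$ to be the left-hand derivative of the concave function $-V$ makes it c\`{a}gl\`{a}d, and $f$ does not depend on $M$), and likewise $\nu((s,t])=W(t)-W(s)=\int_s^t g$ for $s<t<0$ with $g=W'\ge0$ non-decreasing (choosing $g$ to be the right-hand derivative of $W$ makes it c\`{a}dl\`{a}g). Gluing $g$ on $(-\infty,0)$ to $f$ on $(0,\infty)$ produces one version of a density for $\nu|_{\mathbb{R}\setminus\{0\}}$ with the asserted one-sided continuity and monotonicities; in particular $\nu$ has no singular continuous part and no atom off $0$, while $\nu(\{0\})$ is an unconstrained atom. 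Theorem~\ref{thm:unimodal-prob} then follows by applying this with $\nu$ the law of $X$ and splitting off the mass at $0$.

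The step I expect to be the main obstacle is promoting the combinatorial second-difference inequalities to genuine convexity of $V$ and $W$: one must be careful that $V$ and $W$ are exactly the right "primitives'' so that the translation inequality is equivalent to (not merely a consequence of) midpoint convexity, and one must use the regularity of $\nu$ (monotonicity and local boundedness of $V$, $W$) to exclude pathological midpoint-convex-but-nonconvex behaviour. The remaining points — local finiteness of $\nu$ away from the origin, and the precise one-sided-continuity normalisations of the density — are routine bookkeeping but should be stated carefully.
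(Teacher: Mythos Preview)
Your argument is correct and takes a genuinely different route from the paper's.  The paper establishes absolute continuity and monotonicity of the density in two separate stages: first it smooths \(\nu\) by averaging the shifts \(\delta_U*\nu\) over a uniform \(U\) on a short interval \((0,H)\), obtaining an absolutely continuous measure \(\mu\) that dominates \(\nu\) on \([\varepsilon,\infty)\); second, once a density \(f\) exists, it shows \(f(y-z)\ge f(y)\) for almost all \(y>z>0\), introduces \(\bar f(x)=\operatorname{ess\,inf}\{f(u):0<u<x\}\), and uses a rather delicate covering/partition argument to prove \(f=\bar f\) almost everywhere.  You instead observe that applying the translation inequality to intervals of length \(x\) gives the second-difference inequality \(V(s)+V(s+2x)\ge 2V(s+x)\) for the tail \(V(t)=\nu((t,M])\), i.e.\ midpoint convexity; since \(V\) is monotone it is Lebesgue measurable, so the Sierpi\'nski/Blumberg theorem upgrades this to genuine convexity, and then absolute continuity, monotonicity, and the one-sided regularity of the density all fall out simultaneously from the standard one-sided derivatives of a convex function.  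Your approach is shorter and more conceptual, and it delivers the c\`agl\`ad/c\`adl\`ag normalisation for free; the price is that you must assume \(\nu\) is locally finite on \(\mathbb{R}\setminus\{0\}\) to make \(V\) and \(W\) finite (as you note), whereas the paper's convolution argument for absolute continuity does not invoke this explicitly.  Your self-flagged ``main obstacle'' is not really one: you only need the implication from the translation hypothesis to midpoint convexity, not the reverse, so the issue reduces to quoting the midpoint-convex-plus-measurable-implies-convex theorem.
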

Thus the majorization condition 
\eqref{eq:unimodal-measure} actually
corresponds to weak unimodality of the density at \(0\). 
For weak unimodality at an arbitrary $y \in \mathbb{R}$, one 
would need to consider $a(x) \in (y,y+x)$ for $x > 0$, \emph{etc}.

\begin{proof}
It suffices to treat the case of \((0,\infty)\):
	the case of \((-\infty, 0)\) then follows 
	by applying the symmetry of
	reflection in \(0\). Firstly we establish existence of a density for $\nu|_{(0,\infty)}$.
	
	Fix \(\varepsilon > 0\) and a set \(A \subset [\varepsilon, \infty)\). 
	Then choose a bounded interval \((0,H) \subset \mathbb{R}\) 
	of positive Lebesgue measure
	such that \(A \subset [a(x), \infty)\) whenever
	\(x \in (0,H)\). 
	We can do this since \(0 < a(x) < x \to 0\) as 
	\(x \downarrow 0\). 
	
	Now choose a Uniform\((0,H)\) random variable \(U\).
	Define a measure \(\mu\) by
	\(\mu(B) = \mathbb{E}[(\delta_U*\nu)(B)]\).
	Then \(\mu\) has a density with respect to  
	Lebesgue measure. 
	For the density
	of \(U\) by \(h=H^{-1}\mathbf{1}_{(0,H)}\), so
	\begin{equation*}
	\begin{split}
	\mu(B) \quad&=\quad \int h(x)\nu(B-x)\operatorname{d}{x}
	\;=\;
	\int h(x) \int_{B-x} \nu(\operatorname{d}{y}) \operatorname{d}{x}
	\;=\;
	\int \int h(x) \mathbf{1}_{B-x}(y) \nu(\operatorname{d}{y})  \operatorname{d}{x} 
	\\
	&=\quad
	\int \int h(x) \mathbf{1}_{B-y}(x) \operatorname{d}{x} \, \nu(\operatorname{d}{y}) 
	\;=\;
	\int \int_{B-y} h(x) \operatorname{d}{x} \, \nu(\operatorname{d}{y}) 
	\,.
	\end{split}
	\end{equation*}
	Hence \(\mu(B) = 0\) 
	for any Lebesgue null-set \(B \subset\mathbb{R}\).
	Thus \(\mu\) is an absolutely continuous measure.

	Now observe
	that for any \(x \in (0,H)\) we have
	\((\delta_x *\nu) |A \geq \nu|A\)
	(since \(A \subset [a(x), \infty)\)).
	Hence
	 almost surely
	\((\delta_U*\nu) |A \geq \nu|A\).
	This implies that
	\(\mu|A \geq \nu|A\) so long as \(A \subset [a(x), \infty)\),
	hence that \(\nu\) has a density
	on \(A\).
	This holds for all \(\varepsilon>0\) and
	\(A\subset [\varepsilon, \infty)\);
	therefore \(\nu\) 
	must have a density on all of \((0,\infty)\).
	
	The
	unimodality
	condition \eqref{eq:unimodal-measure}
	is
	symmetric under reflection, 
	so \(\nu\) has a density $f$ on all of \(\mathbb{R} \setminus \{0\}\).
	
	We now establish a weakly decreasing property of (a c\`{a}gl\`{a}d version of) the density of $\nu|_{(0,\infty)}$. 

Fix $z > 0$ and argue from condition \eqref{eq:unimodal-measure} that
		\[
		f(y-z) \quad\geq\quad f(y) \qquad \text{ for almost all } y > z >0\,.
		\]
		Indeed, if $f(y-z) < f(y)$ for $y \in A \subset (z, \infty)$ with $\operatorname{Leb}(A) > 0$, then
		\[
		\left( \delta_z * \nu \right) (A) \quad=\quad \int_A f(y-z) \operatorname{d}{y} \quad<\quad \int_A f(y) \operatorname{d}{y} = \nu(A) \,, 
		\]
	which contradicts \eqref{eq:unimodal-measure}. 
	So
	\(f(u) \geq f(x)\) for almost all $u$, $x$ with 
	\(u<x\in(0,\infty)\),
	and (for almost all \(x>0\))
	\begin{equation*}
	f(x) \quad\leq\quad 
	\bar{f}(x)
	 \quad=\quad 
	\operatorname{ess~inf}\{f(u)\;:\; 0<u<x\}\,.
	\end{equation*}
	Evidently $\bar{f}(x)$ is non-increasing in $x$ for $x > 0$.
	The proof is completed if we can
	show that \(f(x)=\bar{f}(x)\)
	for almost all \(x>0\), as we can then deduce that \(\bar{f}\)  is a 
	c\`{a}gl\`{a}d 
	non-increasing version of \(f\) on \((0,\infty)\). 
	This final step is a little more delicate.
	
	Consider a range \((a,a+T]\subset (0,\infty)\),
	and fix \(\varepsilon>0\).
	Consider the set \(M_{a,T,\varepsilon}\subset(a,a+T]\) given by
	\[
	M_{a,T,\varepsilon} = \{x\in(a,a+T]: f(x) < \bar{f}(x)-\varepsilon\}\,,
	\]
	and suppose \(\operatorname{Leb}(M_{a,T,\varepsilon})>0\). 
	Divide \((a,a+T]\) into \(K\) disjoint half-open intervals 
	of equal length \(T/K\) and suppose that
	\(L\) of these intersect with \(M_{a,T,\varepsilon}\)
	in a set of positive measure; evidently 
	\(L \times T/K \geq \operatorname{Leb}(M_{a,T,\varepsilon})\).
	For such an interval \(I=(x,y]\)
	we know that \(\operatorname{Leb}(I\cap M_{a,T,\varepsilon})>0\)
	and therefore (by the properties of \(\operatorname{ess~inf}\) and the monotonicity of $\bar{f}$)
	\[
	\bar{f}(y)\quad \leq\quad \bar{f}(x-)-\varepsilon\,,
	\]
	where $ \bar{f}(x-)=\lim_{z\uparrow x} \bar{f}(z)$.
	Hence \(\bar{f}\) must decrease by at least \(\varepsilon\) on
	such an interval \(I\).
	
	Summing over all intervals in the 
	above decomposition of \((a,a+T]\),
	we deduce
	\[
	\bar{f}(a-)-\bar{f}(a+T)
	\quad\geq\quad
	L \varepsilon \quad\geq\quad 
	\operatorname{Leb}(M_{a,T,\varepsilon}) (K/T) \varepsilon
	\,.
	\]
	But the left-hand side is bounded above by \(\bar{f}(a-)\),
	so we may deduce
	\[
	\operatorname{Leb}(M_{a,T,\varepsilon}) \quad\leq\quad
	\frac{\bar{f}(a-) }{\varepsilon}
	\frac{T}{K}
	\,.
	\]
	Since \(K\) can be chosen to be arbitrarily large,
	we deduce that \(\operatorname{Leb}(M_{a,T,\varepsilon})=0\). 
	This holds for all \(\varepsilon>0\) and all \(a,T>0\).
	Hence
	\(f(x)=\bar{f}(x)\) for almost all \(x>0\),
	as required to complete the proof.
	
\end{proof}

\subsection{Optimal Wasserstein coupling for concave costs}\label{subsubsect:AMRcoupling}

In order to prove optimality for suitable AMR couplings we apply
the results of Section \ref{sec:unimodal} to
probability distributions $F$ and $G$ on $\mathbb{R}$ such that $F$ is a mixture 
of a Dirac mass at $0$ with an absolutely continuous distribution whose density 
is weakly unimodal at $0$, while $G$ is a right-shift of a (possibly different) 
distribution of the same form.

We further suppose that there exists $\zeta \in \mathbb{R}$ with the property 
that the function $F - G$ is strictly increasing on $(-\infty,\zeta]$ and 
strictly decreasing on $[\zeta,\infty)$, cf.\ \eqref{eq:defZeta}. This is 
automatically true if $F$ is strictly unimodal and $G$ is a shift of $F$ to the 
right. 

As discussed at the beginning of Section \ref{sec:rw}, we are interested in 
finding couplings that minimize concave cost functions. However 
in this subsection 
it is convenient to
focus on the equivalent problem of 
maximizing convex payoffs (see the discussion before  Lemma 
\ref{lem:AMRoptimality2} for further comments on this equivalence). To this end, 
we apply the methods from \cite{Rogers-1999} to McCann's AMR coupling 
defined in Section \ref{sec:mccann}. Note that \cite{Rogers-1999} considers a 
convex-monotone coupling defined in a completely different way, see (2) therein. 
However,  \cite[Lemma 1]{Rogers-1999} combines with the optimality result 
below (which implies uniqueness of the optimal coupling by Remark 
\ref{remarkUniqueness}) to show that the AMR coupling is the same as the 
convex-monotone coupling if the distribution $F$ is strictly 
unimodal and $G$ is a right-shift of $F$. In general the two definitions are very 
different: 
AMR can be interpreted in many different ways in non-unimodal situations \cite{McCann-1999}
while
the convex-monotone coupling is well-defined in such cases.

By the definition of the AMR coupling $(X,Y)$ from Section \ref{sec:mccann}, for any $a < b \leq \zeta$
\begin{equation}\label{defAMRpart1}
\mathbb{P}\left[(X,Y) \in (a,b]^2 , X = Y \right] 
\quad=\quad \mathbb{P}\left[X \in (a,b] , X = Y \right] 
\quad=\quad G(b) - G(a) 
\end{equation}
and for any $\zeta < a < b$
\begin{equation}\label{defAMRpart2}
\mathbb{P}\left[(X,Y) \in (a,b]^2 , X = Y \right] 
\quad=\quad \mathbb{P}\left[Y \in (a,b] , X = Y \right] 
\quad=\quad F(b) - F(a) \,.
\end{equation}
Thus for any $a \leq \zeta < b$
\begin{equation*}
\mathbb{P}\left[(X,Y) \in (a,b]^2 , X = Y \right] 
\quad=\quad G(\zeta) - G(a) + F(b) - F(\zeta) \,.  
\end{equation*}

Now $F$ and $G$ are not necessarily diffuse, so \eqref{eq:AMR-density}
cannot be applied to define a mapping $\rho_{F,G}(x)$ for all $x \in \mathbb{R}$.
However, for any $x < \zeta$ such that $F$ does not have an atom at $x$ (i.e., 
for any $x \neq 0$ in the setting of this section), we can define a real number 
$\underline{\operatorname{AMR}}(x) \geq \zeta$ by
\begin{equation*}
\underline{\operatorname{AMR}}(x) 
\quad=\quad
\inf \{ y > \zeta : F(x) - G(x) \geq F(y) - G(y) \} \,.
\end{equation*}
So for any $a$ such that $a \leq \zeta < b$ and $F$ does not have an 
atom at $a$, if $a < X < Y$ and $b > \underline{\operatorname{AMR}}(a)$, 
then $Y \leq b$. 
Thus if $a \leq \zeta < b$
and
$b > \underline{\operatorname{AMR}}(a)$ then
\begin{equation}\label{defAMRpart3}
\begin{split}
\mathbb{P}\left[(X,Y) \in (a,b]^2 , X < Y \right]
\quad&=\quad
\mathbb{P}\left[a < X \leq \zeta < Y \leq b \right]
\quad=\quad 
\mathbb{P}\left[a < X \leq \zeta, X < Y \right] \\
&=\quad (F(\zeta) - F(a)) - (G(\zeta) - G(a))  \,. 
\end{split}
\end{equation}
Similarly, for any $a$ such that $a \leq \zeta < b$ and $F$ does not have an atom at $a$,
if $b \leq \underline{\operatorname{AMR}}(a)$ then
\begin{equation}\label{defAMRpart4}
\begin{split}
\mathbb{P}\left[(X,Y) \in (a,b]^2 , X < Y \right]
\quad&=\quad
\mathbb{P}\left[a < X \leq \zeta < Y \leq b \right]
\quad=\quad
\mathbb{P}\left[\zeta < Y \leq b, X < Y \right] \\
&=\quad (G(b) - G(\zeta)) - (F(b) - F(\zeta)) \,.
\end{split}
\end{equation}
Note that the corresponding probabilities are 
zero
in all the other possible cases of $a$ and $b$, so long as $a$ and $b$ respectively
avoid the atoms of $F$ and $G$.

We need the following technical result, 
whose proof is adapted from that of Lemma 1 in \cite{Rogers-1999}.

\begin{lemma}
	For any random variables $X$ and $Y$ and for any $c > 0$ we have
	\begin{equation*}
	\mathbb{E}\left[\left( c - |X - Y| \right)^{+}\right]
	\quad=\quad
	\int_{-\infty}^{\infty} \mathbb{P} \left[(X,Y) \in (t-c,t]^2 \right] \operatorname{d}{t} \,.
	\end{equation*}
	\begin{proof}
This follows by direct manipulation of the integral:
\begin{equation*}
		\begin{split}
& \int_{-\infty}^{\infty} \mathbb{P} \left[(X,Y) \in (t-c,t]^2 \right] \operatorname{d}{t} 
\quad=\quad 
\int_{-\infty}^{\infty} \big(\mathbb{P} \left[t - c < X \leq Y \leq t \right) + \mathbb{P} \left( t - c < Y < X \leq t\right] \big)
\operatorname{d}{t} \\
    &\qquad=\quad \mathbb{E} \left[ \int_{-\infty}^{\infty}
		\big( \mathbf{1}_{\{ Y \leq t < X + c , X \leq Y \}} + \mathbf{1}_{\{ X \leq t < Y + c , Y < X \}} \big) 
		\operatorname{d}{t} \right] \\
    &\qquad=\quad \mathbb{E} \left[ \left( X + c - Y \right)\mathbf{1}_{\{ X \leq Y \}}
                    + \left( Y + c - X \right) \mathbf{1}_{\{ Y < X \}} \right] 
        \quad=\quad \mathbb{E} \left[\left( c - |X - Y| \right)^{+}\right] \,.
		\end{split}
		\end{equation*}
	\end{proof}
\end{lemma}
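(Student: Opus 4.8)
The plan is to derive the identity from a single application of Tonelli's theorem followed by an elementary computation of the Lebesgue measure of an interval. There is no genuine obstacle; the only point requiring care is the bookkeeping with the half-open intervals $(t-c,t]$, and I will flag that explicitly at the end.

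First I would rewrite each probability on the right-hand side as the expectation of an indicator, so that
\[
\int_{-\infty}^{\infty} \mathbb{P}\left[(X,Y) \in (t-c,t]^2\right] \operatorname{d}{t}
\quad=\quad
\int_{-\infty}^{\infty} \mathbb{E}\left[\mathbf{1}_{\{t-c < X \leq t,\ t-c < Y \leq t\}}\right] \operatorname{d}{t} \,.
\]
The integrand is non-negative and is measurable in the pair $(t,\omega)$ (the set $\{(t,x): t-c < x \leq t\}$ is Borel in $\mathbb{R}^2$, and $X$, $Y$ are measurable), so Tonelli's theorem allows the interchange of $\mathbb{E}$ and $\int\operatorname{d}{t}$ with no integrability hypothesis needed.

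Next I would evaluate the resulting pathwise integral $\int_{-\infty}^{\infty} \mathbf{1}_{\{t-c<X\leq t,\ t-c<Y\leq t\}}\operatorname{d}{t}$ for fixed values of $X$ and $Y$. The constraint $t-c<X\leq t$ is equivalent to $t\in[X,X+c)$, and $t-c<Y\leq t$ is equivalent to $t\in[Y,Y+c)$, so the admissible set of $t$ is the half-open interval $[\max(X,Y),\,\min(X,Y)+c)$, of Lebesgue measure $\big(\min(X,Y)+c-\max(X,Y)\big)^{+}=\big(c-|X-Y|\big)^{+}$. Substituting this back in and using Tonelli once more in the opposite direction produces $\mathbb{E}\big[(c-|X-Y|)^{+}\big]$, which is exactly the left-hand side.

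The computation could equally be organised by splitting on $\{X\leq Y\}$ and $\{Y<X\}$ — which is what one does implicitly when writing $\mathbb{P}[(X,Y)\in(t-c,t]^2]=\mathbb{P}[t-c<X\leq Y\leq t]+\mathbb{P}[t-c<Y<X\leq t]$ and integrating each term in $t$ against $\operatorname{d}{t}$, obtaining $(X+c-Y)\mathbf{1}_{\{X\leq Y\}}+(Y+c-X)\mathbf{1}_{\{Y<X\}}$ — but the symmetric $\max/\min$ formulation is slightly cleaner and makes the appearance of the positive part transparent. As anticipated, the one potential pitfall is pinning down the endpoints of the $t$-interval correctly (half-open, $[\max,\min+c)$); since individual points are Lebesgue-null this never affects the value of the integral, but it is worth stating precisely.
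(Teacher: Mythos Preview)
Your proof is correct and follows essentially the same approach as the paper: both rewrite the probability as an expectation of an indicator, interchange integration and expectation via Tonelli, and evaluate the inner Lebesgue integral. The paper carries out the computation via the explicit split on $\{X\leq Y\}$ and $\{Y<X\}$ that you mention as an alternative, whereas your $\max/\min$ formulation is slightly cleaner (and in fact handles the positive part more transparently than the paper's intermediate step).
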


As an immediate corollary, it follows that
\begin{equation*}
\mathbb{E}\left[\left( c - |X - Y| \right)^{+} \right]
\quad\leq\quad
\int_{-\infty}^{\infty} \big( \mathbb{P}\left[t - c < X \leq t \right] \wedge \mathbb{P}\left[t - c < Y \leq t \right] \big)
\operatorname{d}{t} \,. 
\end{equation*}
We will now show that the upper bound is actually obtained by the coupling by 
anti-monotone rearrangement defined in Section \ref{sec:mccann}, i.e., we will 
show that, since the AMR coupling $(X,Y)$ satisfies 
\eqref{defAMRpart1}-\eqref{defAMRpart4}, for Lebesgue-almost all $t$, then in this particular case
\begin{equation}\label{AMRisOptimal}
\mathbb{P} \left[(X,Y) \in (t-c,t]^2 \right]
\quad=\quad
\min \{ F(t) - F(t-c), G(t) - G(t-c) \} \,.
\end{equation}
In the sequel, condition \eqref{AMRisOptimal} is needed only for 
Lebesgue-almost all $t \in \mathbb{R}$ since we are interested in values of the 
expressions in \eqref{AMRisOptimal} after integration with respect to $t$. 
So 
it suffices to show that \eqref{AMRisOptimal} holds only for $t$ and $c$ such that 
$F$ does not have an atom at $t-c$ (so that \eqref{defAMRpart3} and 
\eqref{defAMRpart4} apply for $a = t - c$).

Consider the case of $t - c < \zeta \leq t$. Then we have
\begin{equation*}
\begin{split}
& \mathbb{P} \left[ (X,Y) \in (t-c,t]^2 \right]
\quad=\quad 
\\
& \qquad
\mathbb{P} \left[t - c < X \leq \zeta, X = Y \right] + \mathbb{P} \left[\zeta < Y \leq t, X = Y \right] 
+ \mathbb{P} \left[t - c < X \leq \zeta < Y \leq t \right] \,.
\end{split}
\end{equation*}
Applying \eqref{defAMRpart1} and \eqref{defAMRpart2}, the first two summands on the right add up to
\begin{equation*}
G(\zeta) - G(t-c) + F(t) - F(\zeta) \,.
\end{equation*}
If $t > \underline{\operatorname{AMR}}(t-c)$, then we can apply \eqref{defAMRpart3} to show that the third summand equals
\begin{equation*}
F(\zeta) - G(\zeta) + G(t-c) - F(t-c) \,,
\end{equation*}
while if $t \leq \underline{\operatorname{AMR}}(t-c)$ then by \eqref{defAMRpart4} the third summand is equal to
\begin{equation*}
F(\zeta) - G(\zeta) + G(t) - F(t) \,.
\end{equation*}
Note that if $t \leq \underline{\operatorname{AMR}}(t-c)$ then $F(t) - F(t-c) \geq G(t) - G(t-c)$, while if $t > \underline{\operatorname{AMR}}(t-c)$ then $F(t) - F(t-c) \leq G(t) - G(t-c)$, and hence addition yields \eqref{AMRisOptimal}. The other cases can be checked more directly, as the equivalent of the third summand is then zero.

We have proved the following result:
\begin{lemma}\label{lem:AMRoptimality}
	Let $F$ and $G$ be cumulative distribution functions. Suppose that there 
exists $\zeta \in \mathbb{R}$ with the property that the function $F - G$ is 
strictly increasing on $(-\infty,\zeta]$ and strictly decreasing on 
$[\zeta,\infty)$. Then for any $c > 0$, the AMR coupling $(X,Y)$ is the optimal 
coupling in the sense of maximizing the payoff function $\mathbb{E}\left[ (c - 
|X - Y|)^{+} \right]$. 
\end{lemma}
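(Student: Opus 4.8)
The plan is a sandwich argument: establish an upper bound on $\mathbb{E}[(c-|X'-Y'|)^{+}]$ valid for \emph{every} coupling $(X',Y')$ of $F$ and $G$, and then show the AMR coupling attains it. First I would record that the hypotheses place us in the setting of Section~\ref{sec:mccann}: since $F-G$ vanishes at $\pm\infty$, is strictly increasing on $(-\infty,\zeta]$ and strictly decreasing on $[\zeta,\infty)$, it is nonnegative, so $G$ stochastically dominates $F$ and each super-level set $\{F-G\geq\ell\}$ is an interval. Hence McCann's AMR coupling $(X,Y)$ of $F$ and $G$ is well-defined, with $X\leq\zeta<Y$ on $\{X\neq Y\}$ and $X=Y$ otherwise (so $\mathbb{P}[X>Y]=0$), and it satisfies the identities \eqref{defAMRpart1}--\eqref{defAMRpart4}.

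For the upper bound, the technical Lemma gives $\mathbb{E}[(c-|X'-Y'|)^{+}]=\int_{\mathbb{R}}\mathbb{P}[(X',Y')\in(t-c,t]^2]\,\operatorname{d}{t}$, and bounding the integrand by $\mathbb{P}[X'\in(t-c,t]]\wedge\mathbb{P}[Y'\in(t-c,t]]=(F(t)-F(t-c))\wedge(G(t)-G(t-c))$ yields, after integration, the desired bound (this is the corollary stated just before the lemma). It therefore remains to prove \eqref{AMRisOptimal} for the AMR coupling and Lebesgue-almost every $t$: integrating \eqref{AMRisOptimal} and applying the technical Lemma once more to $(X,Y)$ shows that $\mathbb{E}[(c-|X-Y|)^{+}]$ equals the upper bound, which is optimality.

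The core step is thus \eqref{AMRisOptimal}, which I would verify by locating $(t-c,t]$ relative to $\zeta$; it suffices to treat $t$ with $t-c$ avoiding the (at most countably many) atoms of $F$ and $G$ and with $t\notin\{\zeta,\zeta+c\}$, since these exclusions are Lebesgue-null. The case $t-c<\zeta<t$ is exactly the computation done before the lemma: \eqref{defAMRpart1} and \eqref{defAMRpart2} give the $X=Y$ contribution, and the $X<Y$ contribution is supplied by \eqref{defAMRpart3} or \eqref{defAMRpart4} according to whether $t>\underline{\operatorname{AMR}}(t-c)$ or $t\leq\underline{\operatorname{AMR}}(t-c)$, the two branches matching the two branches of the minimum in \eqref{AMRisOptimal}. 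When $t<\zeta$ the interval sits inside $(-\infty,\zeta]$, so \eqref{defAMRpart1} makes the $X=Y$ part equal $G(t)-G(t-c)$ while the $X<Y$ part vanishes (on $\{X<Y\}$ one has $Y>\zeta>t$); since $F-G$ is strictly increasing on $(-\infty,\zeta]$ we get $F(t)-F(t-c)\geq G(t)-G(t-c)$, so \eqref{AMRisOptimal} holds. The case $t-c>\zeta$ is the mirror image, using \eqref{defAMRpart2} and strict decrease of $F-G$ on $[\zeta,\infty)$. Integrating \eqref{AMRisOptimal} in $t$ then completes the proof.

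The only real friction I anticipate is bookkeeping in this last case analysis: confirming that the degenerate configurations just mentioned are genuinely Lebesgue-null in $t$ (so that the almost-everywhere statement is all that is needed), and checking that in each sub-case the probability produced by \eqref{defAMRpart1}--\eqref{defAMRpart4} lands on the correct branch of $\min\{F(t)-F(t-c),\,G(t)-G(t-c)\}$. Everything else is a direct assembly of results already established.
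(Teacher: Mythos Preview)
Your proposal is correct and follows essentially the same argument as the paper: the proof of Lemma~\ref{lem:AMRoptimality} in the paper is precisely the computation carried out in the paragraphs preceding the lemma statement, using the technical integral identity for $\mathbb{E}[(c-|X-Y|)^+]$, the resulting upper bound, and the verification of \eqref{AMRisOptimal} via \eqref{defAMRpart1}--\eqref{defAMRpart4}. Your treatment of the side cases $t<\zeta$ and $t-c>\zeta$ is slightly more explicit than the paper's (which dismisses them with ``The other cases can be checked more directly, as the equivalent of the third summand is then zero''), but the approach is identical.
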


\begin{remark}\label{remarkUniqueness}
	From the argument above, it is evident that any coupling $(X,Y)$ satisfying 
\eqref{AMRisOptimal} must also satisfy \eqref{defAMRpart1}-\eqref{defAMRpart4} 
and, as a consequence, must be defined via 
\eqref{eq:skorokhod-def1}-\eqref{eq:skorokhod-construction}. This shows that the 
AMR coupling given by  
\eqref{eq:skorokhod-def1}-\eqref{eq:skorokhod-construction} is essentially the 
unique coupling $(X,Y)$ maximizing the payoff function $\mathbb{E}\left[ (c - |X 
- Y|)^{+} \right]$. This has to be understood in the sense that any coupling 
maximizing $\mathbb{E}\left[ (c - |X - Y|)^{+} \right]$ must satisfy 
\eqref{eq:skorokhod-def1}-\eqref{eq:skorokhod-construction}, however, condition 
\eqref{eq:skorokhod-def1}-\eqref{eq:skorokhod-construction} itself is only 
unique up to the choice of the Skorokhod representation as explained in Section 
\ref{sec:skorokhod}. In particular, for $F$ and $G$ with atoms, there will be 
more than one ``version'' of AMR satisfying 
\eqref{eq:skorokhod-def1}-\eqref{eq:skorokhod-construction}.
\end{remark}

From now on we will focus on the special case of $G = F_a$ for $a > 0$, where we 
define $F_a(x) = F(x - a)$ for all $x \in \mathbb{R}$. Notice that in this case 
the AMR coupling $(X,Y)$ of random variables with distributions $F$ and $F_a$ 
depends only on $X$ and $a$ and hence we will denote it by $(X,\operatorname{AMR}_a(X))$.

In order to proceed, we need the following lemma. Its proof is a reformulation 
of the proof of Lemma~2 in \cite{Rogers-1999}, and is included here for 
completeness.

\begin{lemma}\label{lemmaSupIsConvex}
	Fix a random variable $X$ whose distribution is weakly unimodal.
	For any $a$, $c > 0$ consider the function \(\psi\) defined by
	\begin{equation}\label{defPsi}
	\psi(a,c) \quad=\quad
	\sup_{Y: \mathcal{L}(Y) = \mathcal{L}(X + a)} \mathbb{E} \left[ \left( c - |X-Y| \right)^{+} \right] \,,
	\end{equation}
	where $\mathcal{L}(Y)$ is the law of the random variable $Y$. Then it is the case that
	\begin{equation*}
	\psi(a,c) + a \quad=\quad \psi(c,a) + c \,. 
	\end{equation*}
	\begin{proof}
		We know that
		\begin{equation*}
		\begin{split}
		\psi(a,c) \quad&=\quad
		\mathbb{E} \left[ \left( c - |X-\operatorname{AMR}_a(X)| \right)^{+} \right] 
		\quad=\quad \int_{-\infty}^{\infty} (F(t) - F(t-c)) \wedge (F_a(t) - F_a(t-c)) \operatorname{d}{t} \\
		\quad&=\quad \int_{-\infty}^{\infty} (F(t) - F(t-c)) \wedge (F(t-a) - F(t-c-a)) \operatorname{d}{t} \,.
		\end{split}
		\end{equation*}
As a consequence of the weak unimodality of the law of $X$, if the absolutely continuous part 
of $F$ has a density $f$, then for a given $c > 0$ there exists a constant 
$\zeta$ such that $f(\zeta) = f(\zeta-c)$ and the function $t \mapsto F(t) - 
F(t-c)$ increases up to $\zeta$ and decreases afterwards. Hence, for 
Lebesgue-almost all $a$ and $c > 0$, there exists $t_0 > \zeta$ such that 
\begin{equation}\label{t0condition}
		F(t_0) - F(t_0 - c) \quad=\quad F(t_0 - a) - F(t_0 - a - c) \,.
		\end{equation}
		and, for all $t \leq t_0$,
		\begin{equation*}
		(F(t) - F(t-c)) \wedge (F(t-a) - F(t-c-a)) \quad=\quad F(t-a) - F(t-c-a) \,,
		\end{equation*}
		whereas, for all $t > t_0$,
		\begin{equation*}
		(F(t) - F(t-c)) \wedge (F(t-a) - F(t-c-a)) \quad=\quad F(t) - F(t-c) \,.
		\end{equation*}
		Note that \eqref{t0condition} may fail for some combinations of $a$ and 
$c$ since $F$ is allowed to have an atom. However, as noted above,
it is sufficient if \eqref{t0condition} holds 
for Lebesgue-almost all $a$ and $c$. Furthermore, observe that condition 
\eqref{t0condition} is symmetric in $a$ and $c$ and hence if the 
roles of $a$ and $c$ in \eqref{defPsi} are exchanged then the same $t_0 = t_0(a,c)$ is obtained. 
Letting $\bar{F}(t) = 1 - F(t)$ be the complementary distribution function,
		\begin{equation*}
		\begin{split}
		\psi(a,c) \quad&=\quad
		\int_{-\infty}^{\infty} (F(t) - F(t-c)) \wedge (F(t-a) - F(t-c-a)) \operatorname{d}{t} \\
		&=\quad
		\int_{\infty}^{t_0} (F(t-a) - F(t-c-a)) \operatorname{d}{t} + \int_{t_0}^{\infty} (F(t) - F(t-c)) \operatorname{d}{t} \\
		&=\quad
		\int_{t_0 - c - a}^{t_0 - a} F(t) \operatorname{d}{t} + \int_{t_0 - c}^{t_0} \bar{F}(t) \operatorname{d}{t} \\
		&=\quad
		\int_{t_0 - c - a}^{t_0 - c} F(t) \operatorname{d}{t} + \int_{t_0 - c}^{t_0 - a} F(t) \operatorname{d}{t} + \int_{t_0 - c}^{t_0 - a} \bar{F}(t) \operatorname{d}{t} + \int_{t_0 - a}^{t_0} \bar{F}(t) \operatorname{d}{t} \\
		&=\quad \int_{t_0 - c - a}^{t_0 - c} F(t) \operatorname{d}{t} + c - a + \int_{t_0 - a}^{t_0} \bar{F}(t) \operatorname{d}{t}
		\quad=\quad \psi(c,a) + c - a \,,
		\end{split}
		\end{equation*}
		where the fourth equality is true when $a < c$, which can be assumed
without loss of generality, and the last equality follows from the fact that 
$t_0(a,c) = t_0(c,a)$ as observed above.
	\end{proof}
\end{lemma}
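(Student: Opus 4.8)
The plan is to convert the claimed identity into an explicit computation with the distribution function $F$, using that the supremum in \eqref{defPsi} is attained by the AMR coupling. By Lemma~\ref{lem:AMRoptimality} together with the optimality formula \eqref{AMRisOptimal}, applied with $G=F_a$ so that $G(t)=F(t-a)$, we get
\[
\psi(a,c) \;=\; \int_{-\infty}^{\infty}\bigl((F(t)-F(t-c)) \wedge (F(t-a)-F(t-c-a))\bigr)\operatorname{d}{t}\,,
\]
so everything reduces to a deterministic statement about the function $h_c(t):=F(t)-F(t-c)$ and its rightward translate $h_c(\,\cdot-a)$.

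First I would pin down the shape of $h_c$. By Theorem~\ref{thm:unimodal-measure} the absolutely continuous part of $F$ has a weakly unimodal density, whence $h_c$ is non-decreasing on a left half-line and non-increasing on a right half-line, with turning point at a $\zeta$ where (a suitable version of) the density $f$ satisfies $f(\zeta)=f(\zeta-c)$. Assuming $a<c$ --- which is harmless, as the asserted identity is symmetric in $a,c$ and trivial when $a=c$ --- the integrand $h_c(t)\wedge h_c(t-a)$ then has a single crossover point $t_0=t_0(a,c)$: it equals $h_c(t-a)$ for $t\le t_0$ and $h_c(t)$ for $t>t_0$. The equation defining $t_0$, namely $F(t_0)-F(t_0-c)=F(t_0-a)-F(t_0-c-a)$, transposes to $F(t_0)-F(t_0-a)=F(t_0-c)-F(t_0-c-a)$, which is visibly symmetric under $a\leftrightarrow c$; this is the crucial observation and it forces $t_0(a,c)=t_0(c,a)$.

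Next I would evaluate the two pieces by elementary substitutions, writing $\bar F=1-F$:
\[
\int_{-\infty}^{t_0}(F(t-a)-F(t-c-a))\operatorname{d}{t}=\int_{t_0-c-a}^{t_0-a}F(t)\operatorname{d}{t}\,,\qquad \int_{t_0}^{\infty}(F(t)-F(t-c))\operatorname{d}{t}=\int_{t_0-c}^{t_0}\bar F(t)\operatorname{d}{t}\,.
\]
Splitting the first range at $t_0-c$ and the second at $t_0-a$, the two overlapping ``middle'' sub-integrals combine through $F+\bar F\equiv1$ to contribute exactly $c-a$, so $\psi(a,c)=\int_{t_0-c-a}^{t_0-c}F(t)\operatorname{d}{t}+(c-a)+\int_{t_0-a}^{t_0}\bar F(t)\operatorname{d}{t}$. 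Running the identical computation with $a$ and $c$ interchanged, and invoking $t_0(c,a)=t_0(a,c)$, produces $\psi(c,a)=\int_{t_0-c-a}^{t_0-c}F(t)\operatorname{d}{t}+\int_{t_0-a}^{t_0}\bar F(t)\operatorname{d}{t}$ with no middle term; subtracting gives $\psi(a,c)-\psi(c,a)=c-a$, which is the claim.

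The hardest part will be the crossover point $t_0$: since $F$ may carry an atom (here at $0$), the equation $F(t_0)-F(t_0-c)=F(t_0-a)-F(t_0-c-a)$ need not be solvable for every pair $(a,c)$, and because $h_c$ is only weakly unimodal a solution need not be unique. The resolution is that the whole argument is used only after integrating in $a$ and $c$ (cf.\ the remark after \eqref{t0condition}), so it suffices to establish \eqref{t0condition} and the three-term formula for $\psi$ for Lebesgue-almost every $(a,c)$; on that generic set the jumps of $F$ do not interfere and a common value $t_0(a,c)=t_0(c,a)$ can be chosen. A minor secondary point is that tail integrals such as $\int_{-\infty}^{t_0}F(t-a)\operatorname{d}{t}$ need not be finite individually, so the computation must be organised so that only differences, supported on bounded intervals, ever appear; then no integrability hypothesis on $F$ is required.
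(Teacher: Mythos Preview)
Your proposal is correct and follows essentially the same route as the paper: you use the AMR optimality formula to express $\psi(a,c)$ as the integral of $h_c(t)\wedge h_c(t-a)$, exploit weak unimodality to split at a crossover point $t_0$, observe the $a\leftrightarrow c$ symmetry of the defining equation for $t_0$, and then do the same change-of-variables and $F+\bar F\equiv 1$ trick to extract the $c-a$ term. Your closing remarks about the atom issue (requiring only Lebesgue-a.e.\ $(a,c)$) and about organising the computation so only bounded-interval integrals appear are exactly the technical caveats the paper makes or implicitly relies on.

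One small clarification: your phrase ``the whole argument is used only after integrating in $a$ and $c$'' is not quite the right justification --- the lemma asserts a pointwise identity in $(a,c)$, not an integrated one. The actual reason Lebesgue-a.e.\ suffices is that $\psi$ is continuous in $(a,c)$ (immediate from the integral representation and dominated convergence), so an a.e.\ identity upgrades to one holding everywhere. This is a cosmetic point and does not affect the correctness of your argument.
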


As a consequence 
\cite[Lemma 2 and the following discussion]{Rogers-1999} the function
\begin{equation*}
a \quad\mapsto\quad \sup_{Y: \mathcal{L}(Y) =\mathcal{L}(X + a)} \mathbb{E} \left[ \left( c - |X-Y| \right)^{+} \right]
\end{equation*}
is convex. 
 However a more general result can be obtained:
	\begin{lemma}\label{lem:generalConvexity}
		Let $\varphi: \mathbb{R}_+ \to \mathbb{R}_+$ be a bounded continuous convex and non-increasing function. Then
		\begin{equation*}
		a \quad\mapsto\quad
		\sup_{Y: \mathcal{L}(Y) =\mathcal{L}(X + a)} \mathbb{E} \left[ \varphi(|X-Y|) \right]
		\end{equation*}
		is bounded, continuous, convex, and non-increasing.
\begin{proof}
			First observe that any such function $\varphi$ can be represented as
			\begin{equation*}
			\varphi(|x-y|) \quad=\quad \lim_{n \to \infty} \sum_{i=1}^{n} \lambda_i^n (c_i^n - |x-y|)^{+} \,,
			\end{equation*}
			with positive constants $\lambda_i^n$, $c_i^n$ chosen to ensure that 
the sequence of functions $f_n(x,y) = \sum_{i=1}^{n} \lambda_i^n (c_i^n - 
|x-y|)^{+}$ is increasing. Hence
			\begin{equation}\label{eq:mct}
			\mathbb{E} \left[ \varphi(|X-Y|) \right] \quad=\quad
			\lim_{n \to \infty} \sum_{i=1}^{n} \mathbb{E} \left[ \lambda_i^n (c_i^n - |X-Y|)^{+} \right]
			\end{equation}
(using the monotone convergence theorem). Moreover, for any pair of random variables $(X,Y)$
			\begin{equation*}
			\lim_{n \to \infty} \sum_{i=1}^{n} \mathbb{E} \left[ \lambda_i^n (c_i^n - |X-Y|)^{+} \right]
			\quad\leq\quad \lim_{n \to \infty} \sum_{i=1}^{n} \sup_{Y: \mathcal{L}(Y) =\mathcal{L}(X + a)}  \mathbb{E} \left[ \lambda_i^n (c_i^n - |X-Y|)^{+} \right]
			\end{equation*}
			and hence
			\begin{equation}\label{eq:convexityAux}
			\begin{split}
&\sup_{Y: \mathcal{L}(Y) =\mathcal{L}(X + a)}  \lim_{n \to \infty} \sum_{i=1}^{n} \mathbb{E} \left[ \lambda_i^n (c_i^n - |X-Y|)^{+} \right]
\quad\leq\quad
\lim_{n \to \infty} \sum_{i=1}^{n} \sup_{Y: \mathcal{L}(Y) =\mathcal{L}(X + a)}  \mathbb{E} \left[ \lambda_i^n (c_i^n - |X-Y|)^{+} \right]
\\
&\quad=\quad \lim_{n \to \infty} \sum_{i=1}^{n}  \mathbb{E} \left[ \lambda_i^n (c_i^n - |X-\operatorname{AMR}(X)|)^{+} \right] 
\quad=\quad \mathbb{E} \left[ \varphi(|X-\operatorname{AMR}(X)|) \right]
\\
&\quad\leq\quad \sup_{Y: \mathcal{L}(Y) =\mathcal{L}(X + a)} \mathbb{E} \left[ \varphi(|X-Y|) \right] 
\quad=\quad \sup_{Y: \mathcal{L}(Y) =\mathcal{L}(X + a)} \lim_{n \to \infty} \sum_{i=1}^{n} \mathbb{E} \left[ \lambda_i^n (c_i^n - |X-Y|)^{+} \right] \,,
			\end{split} 
			\end{equation}
			where in the first equality above Lemma 
\ref{lem:AMRoptimality} is applied to a distribution function $F$ and its right 
shift $x \mapsto F(x-a)$, with the resulting optimal AMR coupling denoted by 
$(X,\operatorname{AMR}(X))$. This shows that
\begin{equation*}
\sup_{Y: \mathcal{L}(Y) =\mathcal{L}(X + a)}  \lim_{n \to \infty} \sum_{i=1}^{n} \mathbb{E} \left[ \lambda_i^n (c_i^n - |X-Y|)^{+} \right] 
\quad=\quad
\lim_{n \to \infty} \sum_{i=1}^{n} 
\sup_{Y: \mathcal{L}(Y) =\mathcal{L}(X + a)}  \mathbb{E} \left[ \lambda_i^n (c_i^n - |X-Y|)^{+} \right] \,.
\end{equation*}
			Combining this equality with Lemma \ref{lemmaSupIsConvex}, we conclude that the function
\begin{equation*}
			a \quad\mapsto\quad
\sup_{Y: \mathcal{L}(Y) = \mathcal{L}(X + a)} \lim_{n \to \infty} \sum_{i=1}^{n} \mathbb{E} \left[ \lambda_i^n (c_i^n - |X-Y|)^{+} \right]
\end{equation*}
			is both bounded, continuous, and also convex, being a limit of a sequence of sums 
of bounded continuous convex functions. It is also non-increasing, since 
any bounded continuous convex function on $[0, \infty)$ must be non-increasing. 
Combining this with \eqref{eq:mct} concludes the proof.
		\end{proof}	
	\end{lemma}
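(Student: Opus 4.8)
The plan is to reduce the general payoff $\varphi$ to the elementary ``hinge'' payoffs $t\mapsto(c-t)^{+}$, $c>0$, for which everything we need is already available: the simultaneous optimality of the AMR coupling (Lemma \ref{lem:AMRoptimality}) and the symmetry identity for $\psi$ (Lemma \ref{lemmaSupIsConvex}). The first move is a representation: any bounded continuous convex non-increasing $\varphi:\mathbb{R}_{+}\to\mathbb{R}_{+}$ is the pointwise supremum of its affine minorants, which have non-positive slope because $\varphi$ is non-increasing; replacing each such minorant by its non-negative part (legitimate since $\varphi\geq0$) exhibits $\varphi$ as a supremum of functions of the form $\lambda(c-t)^{+}$ with $\lambda,c>0$, and a countable subfamily suffices. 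Reorganising these into partial sums and interpolating so that the sums \emph{increase} — for instance by matching $\varphi$ from below on a refining sequence of grids, or by writing $\varphi(t)=\int_{0}^{\infty}(u-t)^{+}\,\nu(\operatorname{d}{u})$ for an appropriate positive measure $\nu$ and taking lower Riemann sums, with the behaviour as $t\to\infty$ absorbed by letting the $c_{i}^{n}$ grow — produces $f_{n}(x,y)=\sum_{i=1}^{n}\lambda_{i}^{n}(c_{i}^{n}-|x-y|)^{+}$ increasing to $\varphi(|x-y|)$ with all $\lambda_{i}^{n},c_{i}^{n}>0$.

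Next I would pass this through the expectation and the supremum. By monotone convergence $\mathbb{E}[\varphi(|X-Y|)]=\lim_{n}\sum_{i=1}^{n}\lambda_{i}^{n}\,\mathbb{E}[(c_{i}^{n}-|X-Y|)^{+}]$ for every coupling, so, writing $g(a)$ for the quantity in the statement, $g(a)=\sup_{Y:\mathcal{L}(Y)=\mathcal{L}(X+a)}\lim_{n}\sum_{i}\lambda_{i}^{n}\,\mathbb{E}[(c_{i}^{n}-|X-Y|)^{+}]$. The crucial point is that this supremum can be taken term by term, because a single coupling realises it in every summand: by Lemma \ref{lem:AMRoptimality} applied to $F$ and its right shift $x\mapsto F(x-a)$, the AMR coupling $(X,\operatorname{AMR}_{a}(X))$ maximises $\mathbb{E}[(c-|X-Y|)^{+}]$ over all admissible $Y$ simultaneously for every $c>0$. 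Thus $g(a)\leq\lim_{n}\sum_{i}\lambda_{i}^{n}\,\psi(a,c_{i}^{n})$ is automatic (a supremum of sums is at most the sum of suprema), while the reverse inequality follows by evaluating $g(a)$ at $Y=\operatorname{AMR}_{a}(X)$; squeezing gives $g(a)=\mathbb{E}[\varphi(|X-\operatorname{AMR}_{a}(X)|)]=\lim_{n}\sum_{i=1}^{n}\lambda_{i}^{n}\,\psi(a,c_{i}^{n})$, with $\psi$ as in Lemma \ref{lemmaSupIsConvex}.

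It then remains to read off the four asserted properties from this formula. For fixed $c>0$ the map $a\mapsto\psi(a,c)$ is convex: by Lemma \ref{lemmaSupIsConvex} it equals $\psi(c,a)+c-a$, and $a\mapsto\psi(c,a)=\sup_{Y:\mathcal{L}(Y)=\mathcal{L}(X+c)}\mathbb{E}[(a-|X-Y|)^{+}]$ is a pointwise supremum of the functions $a\mapsto\mathbb{E}[(a-|X-Y|)^{+}]$, each affine-in-$a$ (hence convex), so $a\mapsto\psi(a,c)$ is convex plus affine. Consequently each partial sum $a\mapsto\sum_{i=1}^{n}\lambda_{i}^{n}\psi(a,c_{i}^{n})$ is finite, continuous and convex, and $g$, being their increasing pointwise limit, is convex; it is bounded above by $\sup\varphi$ and below by $0$; it is continuous, since a finite convex function on $[0,\infty)$ is continuous on $(0,\infty)$ while lower semicontinuity at the endpoint $0$ (forced by being an increasing limit of continuous functions) combines with convexity to give continuity at $0$ as well; and a bounded convex function on $[0,\infty)$ is necessarily non-increasing, since a supporting line of strictly positive slope would drive it to $+\infty$.

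I expect the delicate point to be the very first step: producing an \emph{increasing} approximation of $\varphi$ by positive combinations of hinge functions, reconciling monotonicity of the approximants with positivity of all coefficients and with the possibly non-zero limit $\varphi(\infty)$. Once this representation is in place, the argument is driven almost mechanically by the simultaneous optimality of AMR (Lemma \ref{lem:AMRoptimality}) and the identity of Lemma \ref{lemmaSupIsConvex}, together with elementary facts about convex functions on a half-line.
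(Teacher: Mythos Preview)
Your proposal is correct and follows essentially the same route as the paper: approximate $\varphi$ from below by increasing finite positive combinations of hinge functions, use monotone convergence, exploit the simultaneous optimality of the AMR coupling (Lemma~\ref{lem:AMRoptimality}) to exchange the supremum with the limit of sums, and then invoke Lemma~\ref{lemmaSupIsConvex} to read off convexity (hence continuity and monotonicity given boundedness). Your write-up is in fact slightly more explicit than the paper's in two places --- you spell out why $a\mapsto\psi(a,c)$ is convex via the identity $\psi(a,c)=\psi(c,a)+c-a$, and you are more careful about continuity at the endpoint $0$ --- but the argument is the same.
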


The representation \eqref{eq:mct} 
permits extension of the optimality result for AMR (Lemma 
\ref{lem:AMRoptimality}) to all bounded, continuous, convex, decreasing 
functions $\varphi$. Note further, that the optimization problem of maximizing 
the payoff $\mathbb{E}\left[ \varphi(|X-Y|) \right]$ for such $\varphi$ is 
equivalent to the problem of minimizing the cost 
$\mathbb{E}\left[\Phi(|X-Y|)\right]$ for bounded, continuous, concave, 
increasing functions $\Phi$, and hence the AMR coupling solves this 
minimization problem for such $\Phi$. 
This 
property of AMR
extends
to unbounded, continuous, concave, increasing costs $\Phi$, 
so long as it is assumed that the random variable $X$ in the optimization problem (and hence 
also $Y$) has finite first moment (or even a weaker condition 
$\mathbb{E}\left[\Phi(|X|)\right] < \infty$). Indeed, $\Phi$ specified above can 
be approximated by a sequence of bounded functions $\Phi^n$ (satisfying all the 
remaining properties of $\Phi$), for which AMR solves the corresponding 
optimization problem by the discussion above. Then, due to the assumption on the 
finite moment of $X$, the monotone convergence theorem can again be applied to obtain
\begin{equation*}
\mathbb{E} \left[\Phi (|X-Y|)\right] = \lim_{n \to \infty} \mathbb{E} \left[\Phi^n(|X-Y|)\right] \,,
\end{equation*}  
permitting the conclusion that AMR solves the optimization problem for $\Phi$:
\begin{lemma}\label{lem:AMRoptimality2}
	Let $F$ and $G$ be cumulative distribution functions such that there 
exists $\zeta \in \mathbb{R}$ so that the function $F - G$ is 
strictly increasing on $(-\infty,\zeta]$ and strictly decreasing on 
$[\zeta,\infty)$. Let $\Phi$ be a continuous, concave, increasing cost function 
(possibly unbounded). Finally, suppose $\mathbb{E}[\Phi(|X|)] < \infty$ if $X$ is a 
random variable with distribution function $F$. Then the AMR coupling 
$(X,Y)$ is the unique optimal coupling
(in the sense of Remark \ref{remarkUniqueness}) 
minimizing the payoff function 
$\mathbb{E}[\Phi(|X-Y|)]$. 
\end{lemma}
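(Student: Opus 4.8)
The plan is to reduce the concave‑cost minimisation to a continuous superposition of the single‑level problems already settled in Lemma~\ref{lem:AMRoptimality}, exploiting the fact that the AMR coupling of Section~\ref{sec:mccann} is one fixed coupling which is optimal at every level $c$ simultaneously. First I would record the integral representation of a continuous, concave, increasing cost $\Phi$ with $\Phi(0)=0$, namely
\[
\Phi(r) \;=\; \alpha\,r \;+\; \int_{(0,\infty)} (r\wedge c)\,\sigma(\mathrm{d}c)
\;=\; \int_{(0,\infty]} (r\wedge c)\,\widetilde\sigma(\mathrm{d}c),
\qquad \widetilde\sigma := \alpha\,\delta_\infty + \sigma ,
\]
where $\alpha=\lim_{r\to\infty}\Phi'_+(r)\ge 0$ and $\sigma = -\mathrm{d}\Phi'_+\ge 0$, with $\int_{(0,1)}c\,\widetilde\sigma(\mathrm{d}c)+\widetilde\sigma([1,\infty])<\infty$; equivalently $\Phi(r)=\alpha r+\int_0^r\widetilde\sigma((s,\infty))\,\mathrm{d}s$. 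This is the continuous analogue of the increasing‑sum representation used in the proof of Lemma~\ref{lem:generalConvexity}, and it has the advantage of treating bounded and unbounded $\Phi$ uniformly.

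Next I would observe that, since $r\wedge c = c-(c-r)^+$, Lemma~\ref{lem:AMRoptimality} (valid under the stated monotonicity hypothesis on $F-G$) says precisely that the AMR coupling $(X,Y)$ minimises $\mathbb{E}\big[|X-Y|\wedge c\big]$ over all couplings of $F$ and $G$, for each fixed $c\in(0,\infty)$; letting $c\to\infty$ by monotone convergence, it also minimises $\mathbb{E}\big[|X-Y|\big]$. Integrating these inequalities against $\widetilde\sigma$ (Tonelli is legitimate since every integrand is non‑negative) yields
\[
\mathbb{E}\big[\Phi(|U-V|)\big] \;\ge\; \mathbb{E}\big[\Phi(|X-Y|)\big]
\]
for every coupling $(U,V)$ of $F$ and $G$, where $(X,Y)$ is the AMR coupling. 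The hypothesis $\mathbb{E}[\Phi(|X|)]<\infty$, together with subadditivity of $\Phi$ (a concave function vanishing at $0$ is subadditive) and the corresponding control of the second marginal — automatic when $G=F_a$ — shows $\mathbb{E}[\Phi(|X-Y|)]<\infty$, so the bound is not vacuous. This is the optimality claim, for bounded and unbounded $\Phi$ alike.

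For uniqueness I would argue that if a coupling $(U,V)$ of $F$ and $G$ attains the minimum, then equality holds in $\int_{(0,\infty]}\mathbb{E}[|U-V|\wedge c]\,\widetilde\sigma(\mathrm{d}c)=\int_{(0,\infty]}\mathbb{E}[|X-Y|\wedge c]\,\widetilde\sigma(\mathrm{d}c)$, and since the left integrand dominates the right one $\widetilde\sigma$‑a.e., one gets $\mathbb{E}[|U-V|\wedge c]=\mathbb{E}[|X-Y|\wedge c]$ for $\widetilde\sigma$‑almost every $c$. Both sides are continuous in $c$, so — provided $\Phi$ is strictly concave, in which case $\sigma$ has full support in $(0,\infty)$ — the equality extends to every $c>0$; that is, $(U,V)$ attains the identity \eqref{AMRisOptimal} at every level $c$ and Lebesgue‑almost every $t$. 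By Remark~\ref{remarkUniqueness} this forces \eqref{defAMRpart1}--\eqref{defAMRpart4}, hence the Skorokhod form \eqref{eq:skorokhod-def1}--\eqref{eq:skorokhod-construction}, so $(U,V)$ coincides with the AMR coupling up to the choice of substitute inverse.

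The real engine, Lemma~\ref{lem:AMRoptimality} (and, for uniqueness, Remark~\ref{remarkUniqueness}), is already in place, so the remaining work is bookkeeping; but I expect two points to need care. The first is uniqueness: it genuinely requires strict concavity, since for $\Phi$ affine — or affine on an initial interval wider than the relevant support — many couplings tie (as the paper itself notes for the $W_1$ cost), and because $F$ and $G$ may carry atoms the strongest available conclusion is uniqueness modulo the Skorokhod representation, exactly as formalised in Remark~\ref{remarkUniqueness}. The second is the unbounded case, where one must keep the moment estimates tight enough to guarantee finiteness and to license the interchange of expectation with $\widetilde\sigma$‑integration; this is where $\mathbb{E}[\Phi(|X|)]<\infty$ is used. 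I do not anticipate any obstacle in the core optimality step itself.
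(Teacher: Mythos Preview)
Your argument is correct and follows the same underlying strategy as the paper: reduce the general concave cost to a superposition of the elementary costs handled by Lemma~\ref{lem:AMRoptimality}, then integrate the level-wise optimality. The paper carries this out in two stages --- first approximating a bounded convex decreasing payoff $\varphi$ by increasing finite sums $\sum_i \lambda_i^n(c_i^n-|x-y|)^+$ and passing to the limit by monotone convergence (this is the content of the proof of Lemma~\ref{lem:generalConvexity} and the discussion following it), and then treating the unbounded case by a second monotone approximation $\Phi^n\uparrow\Phi$ through bounded concave costs. Your route replaces both steps by the single integral representation $\Phi(r)=\int_{(0,\infty]}(r\wedge c)\,\widetilde\sigma(\mathrm{d}c)$ and a Tonelli argument, which is cleaner and treats the bounded and unbounded cases at once; the paper's discrete-sum version is slightly more elementary but requires the extra truncation layer.

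Your treatment of uniqueness is in fact more explicit than the paper's. The paper simply asserts uniqueness ``in the sense of Remark~\ref{remarkUniqueness}'' without spelling out how one descends from optimality for $\Phi$ back to optimality at each level $c$. Your observation that this descent requires $\sigma$ to have full support --- i.e.\ strict concavity of $\Phi$ --- is correct and is a genuine caveat: for affine $\Phi$ (or $\Phi$ affine on the relevant range) the minimiser is not unique, as the paper itself notes elsewhere. You are also right that the finiteness of $\mathbb{E}[\Phi(|Y|)]$ is not automatic for general $G$ and needs either the special structure $G=F_a$ or a separate hypothesis; the paper's discussion glosses over this in the same way.
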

We stress that the unimodality of the distribution of $X$ is  
crucial for the existence of a single coupling optimal for all such costs 
(see Example \ref{exampleUnimodality}).

In the sequel we extend this optimality result to cover suitable 
stochastic processes, namely random walks and finite-activity L\'{e}vy 
processes. 

In the remainder of this section, let us focus on random walks with unimodal jumps.
 To this end, 
let $\phi$ be a convex, bounded, continuous, decreasing function as above and let $(X_k)_{k=0}^{\infty}$ be a Markov chain with a family of transition kernels 
$p = \{p(x,\cdot)\}_{x \in \mathbb{R}}$, where $p(x,A) = \mathbb{P}(X_n \in A|X_{n-1}=x)$ for all $n \geq 1$, i.e., the process $(X_k)_{k=0}^{\infty}$ is assumed to be time-homogeneous.
Following Section \ref{sec:unimodal}, we assume that each measure $p(x,\cdot)$ for any $x \in \mathbb{R}$ is a mixture of a Dirac mass at $0$ and a density which is strictly unimodal at $0$. For any $a > 0$, any $n \geq 1$ and any $l \in \{ 0, \ldots, n-1 \}$ we define
\begin{equation*}
\psi^{\phi}_{n,l}(a) \quad=\quad
\sup \mathbb{E} \left[ \phi(|X_n - Y_n|)
\;:\;
|X_l - Y_l| = a  
\right] \,,
\end{equation*}
where the supremum is taken over all 
immersion
couplings $(X_k, Y_k)_{k=l}^{\infty}$ of two copies of $(X_k)_{k=0}^{\infty}$ such that $|X_l - Y_l| = a$. Observe that, due to 
time-homogeneity and the fact that we work with immersion couplings, 
 $\psi^{\phi}_{n,l}(a)=\psi^{\phi}_{n-l,0}(a)=\psi^\phi_{n-l}(a)$ depends only on the value of the difference $n - l$ rather than on the individual values of $n$ and $l$. 
In other words, $\psi^{\phi}_{n,l}(a)$ is the supremum of the distance $\mathbb{E}\left[ \phi(|X_{n-l} - Y_{n-l}|) \right]$ taken over all 
immersion
couplings of Markov chains with the transition kernels $p$, such that $|X_0 - Y_0| = a$.

\begin{theorem}\label{thm:optimality}
	Let $(X_k)_{k=0}^{\infty}$ be a Markov chain with transition kernels $p$ as specified above. Let $(Z_k^{l,a})_{k=l}^{\infty}$ be a Markov chain such that $|Z_l^{l,a} - X_l| = a$ and 
	for all $k > l$ the random vector 
	$(X_k, Z_k^{l,a})$, 
	conditional on $(X_{k-1}, Z_{k-1}^{l,a})$, 
	is an AMR coupling of random variables defined as in \eqref{eq:skorokhod-def1}-\eqref{eq:skorokhod-construction}. Then, for any convex, bounded, continuous, decreasing function $\phi$, any $n \geq 1$ and any $l \in \{ 0, \ldots, n-1 \}$, 
	\begin{equation*}
	\psi^{\phi}_{n,l}(a) = \mathbb{E} \left[ \phi(|X_n - Z_n^{l,a}|) \right] \,.
	\end{equation*}
\end{theorem}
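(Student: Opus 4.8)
The plan is to argue by backward induction on $l$ (equivalently, by induction on the number of remaining steps $m = n - l$), with the induction step reducing the $m$-step optimization over immersion couplings to a single application of the one-step AMR optimality result (Lemma~\ref{lem:AMRoptimality2}) together with the convexity established in Lemma~\ref{lem:generalConvexity}. The base case $l = n - 1$ is exactly Lemma~\ref{lem:AMRoptimality2}: conditional on $(X_{n-1}, Z_{n-1}^{l,a})$ with $|Z_{n-1}^{l,a} - X_{n-1}| = a$, the transition kernels $p(X_{n-1}, \cdot)$ and $p(Z_{n-1}^{l,a}, \cdot)$ are a strictly-unimodal distribution and its right-shift by $a$ (up to reflection, which $\phi(|\cdot|)$ respects), so $F - G$ is strictly increasing then strictly decreasing, the hypothesis of Lemma~\ref{lem:AMRoptimality2} holds, and the AMR coupling maximizes $\mathbb{E}[\phi(|X_n - Y_n|) \mid X_{n-1}, Z_{n-1}^{l,a}]$ among all couplings of the two one-step kernels.

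For the induction step, suppose the claim holds for $m - 1$ remaining steps, i.e.\ $\psi^{\phi}_{m-1}(b) = \mathbb{E}[\phi(|X_{m-1} - Z_{m-1}^{0,b}|)]$ for all $b > 0$, and moreover (this is what we actually need to carry forward) that $b \mapsto \psi^{\phi}_{m-1}(b)$ is a bounded, continuous, convex, non-increasing function of $b \ge 0$ — this is supplied by Lemma~\ref{lem:generalConvexity}, since $\psi^{\phi}_{m-1}(b)$ is precisely the sup of $\mathbb{E}[\phi(|X - Y|)]$ over $Y$ with $\mathcal{L}(Y) = \mathcal{L}(X + b)$ when $X$ has the law of the $(m-1)$-step distribution — wait, here one must be careful: $\psi^\phi_{m-1}$ is defined via optimization over \emph{immersion couplings of the Markov chain}, not over couplings of a single pair of marginals, so the identification with the single-variable sup is itself part of what the induction delivers. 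I would therefore phrase the induction hypothesis as the conjunction: (i) the AMR-driven chain is optimal over $m-1$ steps, and (ii) the resulting optimal value $\psi^\phi_{m-1}(b)$ equals $\sup_{Y:\mathcal{L}(Y)=\mathcal{L}(X_{m-1}+b)}\mathbb{E}[\phi(|X_{m-1}-Y|)]$ and hence, by Lemma~\ref{lem:generalConvexity}, is bounded, continuous, convex and non-increasing in $b$.

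Given this, the induction step goes: for any immersion coupling $(X_k, Y_k)_{k=0}^{\infty}$ with $|X_0 - Y_0| = a$, condition on $\mathcal{F}_1$ and use the immersion (Markov-in-the-common-filtration) property together with the Markov property of each marginal to write
\begin{equation*}
\mathbb{E}\left[\phi(|X_m - Y_m|)\right] \quad=\quad \mathbb{E}\left[\mathbb{E}\left[\phi(|X_m - Y_m|) \,\big|\, \mathcal{F}_1\right]\right] \quad\leq\quad \mathbb{E}\left[\psi^{\phi}_{m-1}(|X_1 - Y_1|)\right],
\end{equation*}
since conditionally on $\mathcal{F}_1$ the continuation $(X_k, Y_k)_{k \ge 1}$ is again an immersion coupling of two copies of the chain started from $X_1, Y_1$. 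Now apply the induction hypothesis (ii): the right side is $\mathbb{E}[g(|X_1 - Y_1|)]$ with $g = \psi^{\phi}_{m-1}$ bounded, continuous, convex and non-increasing, so by Lemma~\ref{lem:generalConvexity} (applied with $\varphi = g$) the one-step AMR coupling $(X_1, Z_1^{0,a})$ maximizes $\mathbb{E}[g(|X_1 - Y_1|)]$ over all couplings of the marginals $\mathcal{L}(X_1), \mathcal{L}(X_1 + a)$ — and in particular over all immersion continuations of the first step — yielding
\begin{equation*}
\mathbb{E}\left[\phi(|X_m - Y_m|)\right] \quad\leq\quad \mathbb{E}\left[\psi^{\phi}_{m-1}(|X_1 - Z_1^{0,a}|)\right] \quad=\quad \mathbb{E}\left[\phi(|X_m - Z_m^{0,a}|)\right],
\end{equation*}
where the last equality uses the induction hypothesis (i) applied along the AMR chain from time $1$. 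Taking the supremum over all immersion couplings on the left gives $\psi^{\phi}_m(a) \le \mathbb{E}[\phi(|X_m - Z_m^{0,a}|)]$, and the reverse inequality is trivial since the AMR chain is itself an admissible immersion coupling; this also shows it is Markovian. To close the induction one records that the common value $\psi^\phi_m(a)$ then equals $\sup_{Y}\mathbb{E}[\phi(|X_m - Y|)]$ over $Y$ with $\mathcal{L}(Y)=\mathcal{L}(X_m+a)$ (the sup over couplings of marginals dominates the sup over immersion couplings, and is attained by AMR which is an immersion coupling), re-establishing (ii) at level $m$ via Lemma~\ref{lem:generalConvexity}.

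The main obstacle is the bookkeeping around why conditioning on $\mathcal{F}_1$ and invoking optimality at level $m-1$ is legitimate: one needs the immersion property to guarantee that the conditional law of the continuation, given $\mathcal{F}_1$, is that of an immersion coupling of two fresh copies of the chain (so that $\psi^\phi_{m-1}$ is the correct conditional bound), and one needs the fact — genuinely used, not cosmetic — that the level-$(m-1)$ optimal value, as a function of the gap, is \emph{convex} and not merely some function, so that the second-to-last display (optimality of one-step AMR against the payoff $g(|\cdot|)$) actually follows from Lemma~\ref{lem:generalConvexity} rather than requiring a separate argument. Everything else — time-homogeneity reducing $\psi^\phi_{n,l}$ to $\psi^\phi_{n-l}$, the reduction of the two shifted kernels to the ``$F$ and right-shift of $F$'' setting after possibly reflecting, boundedness ensuring all expectations are finite — is routine and can be dispatched quickly.
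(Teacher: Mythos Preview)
Your overall strategy is the paper's: backward induction (Bellman principle), with the inductive hypothesis carrying both optimality of the iterated AMR coupling and convexity of $\psi^\phi_{m}$ in the gap, so that at each step Lemma~\ref{lem:generalConvexity} can be invoked with $\varphi = \psi^\phi_{m-1}$ in place of $\phi$. The conditioning on $\mathcal{F}_1$, the bound by $\psi^\phi_{m-1}(|X_1-Y_1|)$, and the subsequent one-step optimization are all exactly as in the paper.

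There is, however, a genuine gap in how you state and close hypothesis (ii). You assert that $\psi^\phi_{m}(a) = \sup_{Y:\mathcal{L}(Y)=\mathcal{L}(X_m+a)}\mathbb{E}[\phi(|X_m - Y|)]$, i.e.\ that the $m$-step immersion optimum coincides with the one-shot optimal coupling of the time-$m$ marginals. You flag this yourself (``wait, here one must be careful'') but then build it into the induction anyway. The justification offered --- ``the sup over couplings of marginals \ldots\ is attained by AMR which is an immersion coupling'' --- conflates two different objects: the one-shot AMR of the marginals $\mathcal{L}(X_m)$ and $\mathcal{L}(X_m+a)$ is a coupling of random variables, not an immersion coupling of the chain, and there is no reason it should agree with the iterated AMR chain at time $m$. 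Worse, $\mathcal{L}(X_m)$ is an $m$-fold convolution of a unimodal distribution, which need not be unimodal, so Lemmas~\ref{lemmaSupIsConvex}--\ref{lem:generalConvexity} do not even apply to that one-shot problem.

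The fix, which is what the paper does, is to weaken (ii) to simply ``$\psi^\phi_{m-1}$ is bounded, continuous, convex and non-increasing'', with no identification with a time-$(m-1)$ marginal sup. Your own displays already give
\[
\psi^\phi_m(a) \;=\; \mathbb{E}\bigl[\psi^\phi_{m-1}(|X_1 - Z_1^{0,a}|)\bigr] \;=\; \sup_{Y:\,\mathcal{L}(Y)=\mathcal{L}(X_1+a)}\mathbb{E}\bigl[\psi^\phi_{m-1}(|X_1 - Y|)\bigr],
\]
the second equality because the proof of Lemma~\ref{lem:generalConvexity} shows the one-step AMR attains the sup. Now apply Lemma~\ref{lem:generalConvexity} with $\varphi = \psi^\phi_{m-1}$ and $X$ the (unimodal) single-step jump: the right-hand side is bounded, continuous, convex and non-increasing in $a$, re-establishing (ii) at level $m$. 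Convexity propagates through the \emph{one-step} representation with payoff $\psi^\phi_{m-1}$, not through any $m$-step marginal identification.
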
	
\begin{proof}
By construction \((X, Z^{l,a})\) is an immersion coupling,
and indeed a Markovian coupling.

The proof uses mathematical induction
(following the ideas of dynamic programming and the Bellman principle): at level \(n\geq1\)
the inductive hypothesis is that
for any bounded convex 
continuous 
decreasing \(\phi\),
and for 
any 
\(m=1, \ldots, n\), the supremum
 \(\psi^\phi_m(a)\)
is realized using the iterated AMR coupling \((X,Z)\) with \(X_0-Z_0=a\):
moreover \(\psi^\phi_m\) itself is then also convex decreasing.

Consider an immersion coupling \((X,Y)\) such that $Y_0 - X_0 = a >0$
(the case \(a<0\) follows by exchanging rôles of \(X\) and \(Y\)).
Consider the first level \(n=1\) of the inductive hypothesis.
It follows from Lemmas \ref{lem:generalConvexity} and \ref{lem:AMRoptimality2}
that, for any bounded convex 
continuous 
decreasing \(\phi\),
the supremum \(\psi^\phi_1(a)\)
is realized using the single-jump AMR coupling \((X,Z)\) with \(X_0-Z_0=a\):
moreover \(\psi^\phi_1\) inherits the convex decreasing property of \(\phi=\psi^\phi_0\). Thus case \(n=1\) of the induction is valid.

Suppose the inductive hypothesis is valid at level \(n-1\).
To prove level \(n\) it suffices to show that
\[
\psi^\phi_{n}(a)\quad=\quad
\sup\{\mathbb{E}\left[|\phi(|X_n-Y_n|)|\right]\;;\;  |X_0-Y_0|=a \text{ and }(X,Y) \text{ is an immersion coupling}\}
\]
is maximized by taking \(Y=Z^{0,a}\), and that \(\psi^\phi_{n}\)
is convex decreasing.
For brevity we write \(Z^{0,a}=Z\).

We
argue as follows, First, use
 iterated conditional expectation and the definition of \(\psi^\phi_{n-1}\):
\[
\mathbb{E}\left[\phi(|X_n-Y_n|)\right] \quad=\quad
\mathbb{E}\left[\mathbb{E}\left[\phi(|X_n-Y_n|)\mid\mathcal{F}_1\right]\right]
\quad 
\leq 
\quad
\mathbb{E}\left[\psi^\phi_{n-1}(|X_1-Y_1|)\right]\,,
\]
where $\{ \mathcal{F}_n : n \geq 0 \}$ is the common filtration from the definition of $(X_n,Y_n)$ as an immersion coupling. The inequality above is a consequence of the definition of \(\psi^\phi_{n-1}\) and the fact that immersion coupling respects conditional expectations. 
Now employ the inductive hypothesis at level \(1\)
based on \(\psi^\phi_{n-1}\) rather than \(\phi\):
\[
 \mathbb{E}\left[\psi^\phi_{n-1}(|X_1-Y_1|)\right]\quad\leq\quad
\mathbb{E}\left[\psi^\phi_{n-1}(|X_1-Z_1|)\right]\,.
\]
Finally apply
iterated conditional expectation and the inductive hypothesis at level \(n-1\):
\[
\mathbb{E}\left[\psi^\phi_{n-1}(|X_1-Z_1|)\right]
\quad=\quad
\mathbb{E}\left[\mathbb{E}\left[\phi(|X_n-Z_n|\right]\mid\mathcal{F}_1\right]
\quad=\quad
\mathbb{E}\left[\phi(|X_n-Z_n|)\right]\,.
\]
Additionally
\[
\psi^\phi_n(|X_0-Z_0|)\quad=\quad
\mathbb{E}\left[\phi(|X_n-Z_n|)\right]
\quad=\quad\mathbb{E}\left[\psi^\phi_{n-1}(|X_1-Z_1|)\right]\,,
\]
so the convex decreasing property of \(\psi^\phi_{n-1}\) is inherited by \(\psi^\phi_n\).
This completes the proof of the inductive step, and so the result follows.
\end{proof}

By the equivalence of the maximization problem for convex payoffs and 
the minimization problem for concave costs, Theorem \ref{thm:optimality} solves 
the optimization problem described in Section \ref{sec:introduction} for Markov 
chains with unimodal transition probabilities (possibly with Dirac mass at the 
mode). 
An additional 
assumption of $\mathbb{E} \left[ \Phi(|X_k|) \right] < \infty$ for all $k \geq 
1$ is required for unbounded concave costs $\Phi$
(Lemma 
\ref{lem:AMRoptimality2}).

\subsection{Example: necessity of unimodality}

If unimodality fails then the optimal 
coupling may depend on the cost function.
\begin{example}\label{exampleUnimodality}[Two-point equiprobable distribution]
	Suppose \(X\) has a two-point equiprobable distribution with probability 
	masses of \(1/2\) each, located at \(0\) and \(1\).
	Suppose \(Y\) has the same distribution, but shifted \(\alpha\) units to the right for some $\alpha>0$, 
	thus with probability masses located at
	\(\alpha\) and \(1+\alpha\).
	Consider two extreme forms of coupling:
	\begin{enumerate}
		\item Synchronous coupling,
		based on the transportation plan sending \(0\) to \(\alpha\) and \(1\) to 
		\(1+\alpha\).
		\item Anti-monotonic rearrangement (which is equivalent here to a reflection coupling, since the distributions are symmetric), 
		based on the transportation plan sending \(0\) to \(1+\alpha\) and \(1\) to 
		\(\alpha\);
	\end{enumerate}
Evaluate these couplings using a concave cost based on 
	\(c_\gamma(x,y)=|x-y|^\gamma\), with \(\gamma\in(0,1)\). 
Consider the difference of the cost under plan 2 minus cost under plan 1 as a function of $\alpha$:
	\[
	f(\alpha) \quad=\quad
	(1+\alpha)^{\gamma} + |1-\alpha|^{\gamma} - 2 \alpha^{\gamma} \,.
	\]
It is an exercise to verify that the function $f(\alpha)$ is always positive for $\alpha \geq 1$, for all values of $\gamma \in (0,1)$. 
Hence, for $\alpha \geq 1$, the cost under plan 1 is less than the cost under 
plan 2 and hence the synchronous coupling is the preferred choice. On the other 
hand, numerical solution of the equation $f(\alpha) = 0$ for $\alpha \in (0,1)$ 
and $\gamma \in (0,1)$ shows that the optimal strategy is always 
reflection if 
	\(\alpha\leq0.7071\ldots\,\), whereas for \(0.7071\ldots<\alpha<1\) the optimal strategy depends on the choice of 
	\(\gamma\in(0,1)\).
\end{example}

\section{Optimal Markovian coupling for finite-activity L\'{e}vy processes}\label{sec:Levy}

This section employs a uniformization procedure for 
finite-activity L\'{e}vy processes (\emph{i.e.}, compound Poisson processes),
which generalizes the uniformization 
procedure for finite state-space continuous-time Markov chains
\cite[Section 6.7]{Ross-2014}. 
It permits representation
of any immersion coupling of finite-activity L\'{e}vy processes with a L\'{e}vy 
measure $\nu$ of mass $\nu(\mathbb{R})<\infty$, in terms of a Poisson process 
with intensity $2\nu(\mathbb{R})$ supplying jump times for both the coupled 
processes. This in turn permits extension of the optimality result from the 
previous section from Markov chains to finite-activity L\'{e}vy processes.

Let $(X_t, Y_t)_{t \geq 0}$ be an immersion coupling of two copies of a 
real-valued compound Poisson process with a L\'{e}vy 
measure $\nu$, 
adapted to a common filtration $(\mathcal{F}_{t})_{t \geq 0}$. 
 Then
there exist Poisson processes $(N_t^1)_{t \geq 0}$ and $(N_t^2)_{t \geq 0}$ 
 with intensity $\nu(\mathbb{R})$ and two sequences of i.i.d.\ random variables 
$(Z_i^1)_{i=1}^{\infty}$, $(Z_i^2)_{i=1}^{\infty}$ with 
$\mathcal{L}(Z_i^j)= 
\nu/\nu(\mathbb{R})$ 
for $i \in \mathbb{N}_{+}$, $j \in \{ 1, 2 \}$, respectively independent 
of $(N_t^1)_{t \geq 0}$ and $(N_t^2)_{t \geq 0}$, with 
\begin{equation}\label{eq:compoundPoissonCoupling}
X_t \quad=\quad X_0 + \sum_{i=1}^{N_t^1} Z_i^1 \qquad \text{ and } \qquad  Y_t \quad=\quad Y_0 + \sum_{i=1}^{N_t^2} Z_i^2 \,.
\end{equation}
Note that $(Z_i^1)_{i=1}^{\infty}$ and $(Z_i^2)_{i=1}^{\infty}$ 
are both i.i.d.\ sequences, but usually (except in the case of the independent 
coupling), the random variables $Z_i^1$ and $Z_i^2$ are dependent for any fixed 
$i \geq 1$.
Moreover,
\begin{equation*}
N_t^1 \quad=\quad \sum_{i=1}^{\infty} \mathbf{1}_{\{ \tau_i^1 \leq t \}} 
\qquad \text{ and } \qquad 
N_t^2 \quad=\quad \sum_{i=1}^{\infty} \mathbf{1}_{\{ \tau_i^2 \leq t \}} \,,
\end{equation*}
with sequences of random variables $(\tau^j_i)_{i=1}^{\infty}$ for $j \in \{ 1, 
2 \}$, such that $\mathcal{L}(\tau_i^j - \tau_{i-1}^j)=
\operatorname{Exp}(\nu(\mathbb{R}))$ for any $i \geq 2$, and all the random variables $\tau_i^j - 
\tau_{i-1}^j$ for $i \geq 2$ are mutually independent. We also write
\begin{equation*}
X_t \quad=\quad X_0 + \sum_{i=1}^{\infty} \mathbf{1}_{\{ \tau_i^1 \leq t \}} Z_i^1 
\qquad \text{ and } \qquad 
Y_t \quad=\quad Y_0 +  \sum_{i=1}^{\infty} \mathbf{1}_{\{ \tau_i^2 \leq t \}} Z_i^2 \,.
\end{equation*} 
If necessary,
we 
enrich the underlying common filtration	$(\mathcal{F}_{t})_{t \geq 0}$ to ensure the existence of a Poisson process \(\Psi\) of rate \(\nu(\mathbb{R})\)
and Uniform\((0,1)\) random variables \(M_1\), \(M_2\), \ldots, independent of each other and of the 
finite activity L\'evy processes, but appropriately adapted to the common filtration.

\begin{theorem}\label{thm:uniformization}
In the above situation, 
enriching the underlying common filtration as above 
if necessary, 
	 there exists a Poisson process $(N_t)_{t \geq 0}$ with intensity $2 
\nu(\mathbb{R})$ and two sequences of i.i.d.\ random variables 
$(\widetilde{Z}_i^1)_{i=1}^{\infty}$ and $(\widetilde{Z}_i^2)_{i=1}^{\infty}$ 
with 
$\mathcal{L}(\widetilde{Z}_i^j)=\frac{1}{2} \delta_0 + \frac{1}{2\nu(\mathbb{R})} 
\nu$ such that
	\begin{equation}\label{CPPcouplingRepresentation}
	X_t = X_0 + \sum_{i=1}^{N_t} \widetilde{Z}_i^1 \qquad \text{ and } \qquad Y_t = Y_0 + \sum_{i=1}^{N_t} \widetilde{Z}_i^2 \,.
	\end{equation}
	Moreover, both $(\widetilde{Z}_i^1)_{i=1}^{\infty}$ and $(\widetilde{Z}_i^2)_{i=1}^{\infty}$ are independent of $(N_t)_{t \geq 0}$,
	though again the random variables $\widetilde{Z}_i^1$ and $\widetilde{Z}_i^2$ are  typically dependent for any fixed 
$i \geq 1$.
\end{theorem}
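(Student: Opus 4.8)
The plan is to turn the given immersion coupling into a marked Poisson point process whose ground process is the sought-after clock $(N_t)$ and whose marks are the pairs $(\widetilde{Z}_i^1,\widetilde{Z}_i^2)$; the ``uniformisation'' then amounts to inserting the correct number of dummy ticks so that the total intensity becomes $2\nu(\mathbb R)$. Write $\lambda:=\nu(\mathbb R)$. By the immersion property each of $X$ and $Y$ is, individually, a compound Poisson process with L\'evy measure $\nu$ with respect to the common filtration $(\mathcal F_t)$; hence the processes $(N_t^1)$, $(N_t^2)$ in \eqref{eq:compoundPoissonCoupling} are $(\mathcal F_t)$-Poisson processes of rate $\lambda$, and the jump random measures $\mu^X=\sum_i\delta_{(\tau_i^1,Z_i^1)}$ and $\mu^Y=\sum_i\delta_{(\tau_i^2,Z_i^2)}$ are $(\mathcal F_t)$-Poisson random measures with intensity $\mathrm d t\otimes\nu$. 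After an innocuous re-labelling we may and do assume $\nu(\{0\})=0$, absorbing any atom of $\nu$ at $0$ into the $\tfrac12\delta_0$ part.

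First I would build the clock. Let $E_t$ count the jump times of $N^1$ and of $N^2$ with coincidences counted once, and let $J_t$ count the coincidences; then $N^1+N^2=E+J$, so the $(\mathcal F_t)$-compensator of $E$ is $2\lambda t-A_t$, where $A$ is the compensator of $J$. Since $J\le N^1$ we have $A_t\le\lambda t$, and $A$ is continuous because the jumps of $J$ occur at the totally inaccessible times of the Poisson process $N^1$. Now use the enriched filtration introduced before the statement: the rate-$\lambda$ Poisson process $\Psi$ is independent of $(X,Y)$, and I set $D_t:=\Psi_{A_t}$. Time-change of a Poisson process by a continuous adapted increasing process shows that $D$ is a counting process with compensator $A_t$ in the enriched filtration $(\mathcal G_t)$, and that, since $\Psi$ is independent of $\mathcal F_\infty$ while $A$ is $\mathcal F_\infty$-measurable and continuous, the jump times of $D$ a.s.\ avoid those of $E$. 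Put $N:=E+D$; its ticks $\sigma_1<\sigma_2<\dots$ are exactly $\{\tau_i^1\}\cup\{\tau_i^2\}$ together with the dummy times of $D$, and I define $\widetilde{Z}_i^1:=\Delta X_{\sigma_i}$ and $\widetilde{Z}_i^2:=\Delta Y_{\sigma_i}$ (so $\widetilde{Z}_i^1$ equals the jump size of $X$ at $\sigma_i$ if $X$ jumps there, and $0$ otherwise, likewise for $\widetilde{Z}^2$). Since every jump time of $X$ and of $Y$ is an $N$-tick, this yields \eqref{CPPcouplingRepresentation} by construction.

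The heart of the argument is then a single compensator computation. Consider the integer-valued random measure $\Xi:=\sum_i\delta_{(\sigma_i,\widetilde{Z}_i^1)}$ on $[0,\infty)\times\mathbb R$. It is the sum of $\mu^X$ (the non-zero marks, carried by the $\tau_i^1$) and a zero-marked part carried by the ``only-$Y$'' ticks together with the dummy ticks. The $(\mathcal G_t)$-compensator of $\mu^X$ is still $\mathrm d t\otimes\nu$ (the independent enrichment does not change it); the only-$Y$ ticks form $N^2-J$, with compensator $\lambda t-A_t$, the dummy ticks form $D$, with compensator $A_t$, and both carry the mark $0$. Hence the $(\mathcal G_t)$-compensator of $\Xi$ is
\[
\mathrm d t\otimes\nu+\bigl(\mathrm d(\lambda t-A_t)+\mathrm d A_t\bigr)\otimes\delta_0
=\mathrm d t\otimes\nu+\lambda\,\mathrm d t\otimes\delta_0
=2\lambda\,\mathrm d t\otimes\Bigl(\tfrac12\delta_0+\tfrac1{2\lambda}\nu\Bigr),
\]
the $\mathrm d A_t$ terms cancelling, so that mere continuity of $A$ suffices. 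This compensator is deterministic and continuous in $t$, so by the classical characterisation of Poisson random measures by their compensators (Watanabe's theorem) $\Xi$ is a Poisson random measure with intensity $2\lambda\,\mathrm d t\otimes(\tfrac12\delta_0+\tfrac1{2\lambda}\nu)$; since $\tfrac12\delta_0+\tfrac1{2\lambda}\nu$ is a probability measure and the intensity is of product form, the marking theorem gives at once that $N=\Xi(\,\cdot\,\times\mathbb R)$ is a Poisson process of rate $2\lambda$ and that $(\widetilde{Z}_i^1)_i$ is i.i.d.\ with law $\tfrac12\delta_0+\tfrac1{2\lambda}\nu$ and independent of $(N_t)$. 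Running the same argument with $X$ and $Y$ interchanged (using $\mu^Y$, and that $N-N^1$ equals the only-$Y$ ticks plus the dummies, again with compensator $\lambda t$) gives the corresponding statement for $(\widetilde{Z}_i^2)_i$. That $\widetilde{Z}_i^1$ and $\widetilde{Z}_i^2$ are typically dependent needs no proof: in the synchronous coupling they coincide, and under the reflection/AMR couplings of Section~\ref{sec:rw} they are anti-monotonically related.

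I expect the main obstacle to lie in the two measure-theoretic checks I have glossed over: first, that $D=\Psi_{A_t}$ really has $(\mathcal G_t)$-compensator $A_t$ and that its jumps are a.s.\ disjoint from those of $E$ — routine but requiring a careful set-up of $(\mathcal G_t)$ together with quasi-left-continuity of the filtration generated by the two compound Poisson processes — and second, the verification that $\mu^X$ retains the intensity $\mathrm d t\otimes\nu$ after the independent enrichment and that the splitting of $\Xi$'s compensator into an ``$X$-jump'' part and an ``only-$Y$ plus dummies'' part is literally an identity of random measures. The conceptual point that makes the proof short is to package the clock and the first coordinate of the marks into the single random measure $\Xi$ and extract all three assertions (Poisson clock, i.i.d.\ marks with the stated law, independence of clock and marks) from one application of Watanabe's theorem, rather than verifying them separately.
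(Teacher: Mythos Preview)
Your proposal is correct and follows the same overall strategy as the paper: build the clock $N$ by taking the union of the two jump-time sets and adding back ``dummy'' ticks whose compensator matches that of the coincidence process, then read off the marks as the actual increments of $X$ and $Y$ at the $N$-ticks. The two differences worth recording are these. First, to manufacture the dummy process with compensator $A_t$ the paper \emph{thins} the auxiliary rate-$\lambda$ Poisson process $\Psi$ using the predictable density $\eta_t/\lambda$ of $A_t$ and the i.i.d.\ uniform marks $M_i$, whereas you \emph{time-change} $\Psi$ via $D_t=\Psi_{A_t}$; the paper's thinning is tailored to the specific enrichment announced before the theorem (a $\Psi$ adapted in real time together with marks), while your time-change needs the alternative enlargement you allude to, since $A_t$ may exceed $t$ when $\lambda>1$. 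Second, and more interestingly, the paper establishes the distributional claims by first showing that $\Xi^1$ and $(\Xi^2-\Xi)+\widetilde\Psi$ are independent rate-$\lambda$ Poisson processes (Watanabe at the level of counting processes, then no-common-jumps) and then computing characteristic functions of $\widetilde Z_j^k$ and of $(\tau_1,\widetilde Z_1^1)$ directly; your route packages clock and first-coordinate marks into a single integer-valued random measure, computes its compensator as the deterministic product $2\lambda\,\mathrm dt\otimes(\tfrac12\delta_0+\tfrac1{2\lambda}\nu)$, and invokes Watanabe for Poisson random measures plus the marking theorem once. Your packaging is the cleaner of the two and yields the i.i.d.\ law, the independence from $N$, and the Poisson property of $N$ in one stroke, at the cost of the filtration bookkeeping you correctly flag as the main point to nail down.
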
  
\begin{proof}
	We begin by constructing a suitable Poisson process $(N_t)_{t \geq 0}$ with intensity $2 \nu(\mathbb{R})$ given by
	\begin{equation*}
	N_t \quad=\quad \sum_{i=1}^{\infty} \mathbf{1}_{\{ \tau_i \leq t \}}\,,
	\end{equation*}
where the sequence $(\tau_i)_{i=1}^{\infty}$ is such that the random variables 
$\tau_i - \tau_{i-1}$  are mutually independent
with $\mathcal{L}(\tau_i - \tau_{i-1})=\operatorname{Exp}(2\nu(\mathbb{R}))$  
for all $i \geq 2$.
Moreover, 
it will be the case that
	\begin{equation}\label{eq:cppRepresentation}
	X_t \quad=\quad X_0 + \sum_{i=1}^{\infty} \mathbf{1}_{\{ \tau_i \leq t \}} Z_i^1 \mathbf{1}_{\{ \tau_i = \tau_i^1 \}} 
	\qquad \text{ and } \qquad
	Y_t \quad=\quad  Y_0 + \sum_{i=1}^{\infty} \mathbf{1}_{\{ \tau_i \leq t \}} Z_i^2 \mathbf{1}_{\{ \tau_i = \tau_i^2 \}} \,,
	\end{equation} 
so that $\widetilde{Z}_i^1 = Z_i^1 \mathbf{1}_{\{ \tau_i = \tau_i^1 \}}$ 
and $\widetilde{Z}_i^2 = Z_i^2 \mathbf{1}_{\{ \tau_i = \tau_i^2 \}}$ in the representation given by \eqref{CPPcouplingRepresentation}.

	Let $\Xi^i$ be Poisson point processes on $(0, \infty)$ counting the jumps 
of $(N^i_t)_{t \geq 0}$ for $i \in \{ 1, 2 \}$:
	\begin{equation*}
	\Xi^i(A) \quad=\quad \# \{ t \in A: \Delta N^i_t \neq 0 \} \,.
	\end{equation*}
Let $\lambda = \nu(\mathbb{R})$ be the common intensity of $(N^i_t)_{t \geq 0}$ for $i \in \{ 1, 2 \}$. 
Consider the point process $\Xi$ defined (as a counting process) by
	\begin{equation*}
	\Xi(A) \quad=\quad \# \{ t \in A: \Delta N^1_t \cdot \Delta N^2_t \neq 0 \} \,,
	\end{equation*}
counting the simultaneous jumps of $(N^1_t)_{t \geq 0}$ and $(N^2_t)_{t \geq 0}$.
The counting process $\Xi$ has an increasing c\`{a}dl\`{a}g 
compensator $(\Lambda_t)_{t \geq 0}$ (see e.g.\ Chapter 7 of 
\cite{DaleyVere-Jones-2003}). Since $\Xi_t \leq \Xi^1_t$ for all $t > 0$,
it follows that $\Lambda_t \leq \lambda t$ for all $t > 0$.

Hence the compensator $(\Lambda_t)_{t \geq 0}$ is absolutely continuous w.r.t.\ 
the Lebesgue measure on $\mathbb{R}_+$: so there exists a predictable 
process $(\eta_t)_{t \geq 0}$ with values in $[0,\lambda]$ such that $\Lambda_t 
= \int_0^t \eta_s \operatorname{d}{s}$.
	
Recall the independent Poisson point process $\Psi$ with intensity $\lambda$ on $(0,\infty)$,. 
Define a thinned version \(\widetilde{\Psi}\) of $\Psi$ as follows.
Suppose the points of \(\Psi\) are \(0<\widetilde{\tau}_1<\widetilde{\tau}_2<\ldots\),
deemed to be marked by the independent Uniform\((0,1)\) random variables \(M_1\), \(M_2\), \ldots.
Then set
	\begin{equation*}
	\widetilde{\Psi} \quad=\quad
	\left\{\widetilde{\tau}_i\;:\; i\in\{1,2,\ldots\} \text{ and } M_i \leq \eta_{\widetilde{\tau}_i}/\lambda\right\}
	\,.
	\end{equation*}
	Hence $\widetilde{\Psi}$ has the compensator $(\Lambda^{\widetilde{\Psi}}_t)_{t \geq 0}$ given by $\Lambda^{\widetilde{\Psi}}_t = \int_0^t \eta_s \operatorname{d}{s}$, i.e., $\Lambda^{\widetilde{\Psi}}_t = \Lambda_t$ for all $t \geq 0$.

Now define a counting process $\Upsilon$ by
	\begin{equation*}
	\Upsilon \quad=\quad \Xi^1 + \Xi^2 - \Xi + \widetilde{\Psi} \,.
	\end{equation*}
	Hence the compensator $(\Lambda_t^{\Upsilon})_{t \geq 0}$ of $\Upsilon$ is equal to
	\begin{equation*}
	\Lambda_t^{\Upsilon} \quad=\quad \lambda t + \int_0^t \left( \lambda - \eta_s \right) \operatorname{d}{s} + \int_0^t \eta_s \operatorname{d}{s} 
	\quad=\quad 2 \lambda t \,.
	\end{equation*}
Thus $\Upsilon$ is a Poisson process with intensity $2 
\lambda$, following the arguments of  \cite{Bremaud-1975} or  
\cite[Theorem 2.3]{Watanabe-1964}. Moreover, $\Upsilon$ can be viewed as constructed as a sum 
of two point processes: $\Xi^1$ and $(\Xi^2 - \Xi) + \widetilde{\Psi}$ which 
both have the compensator given by $\lambda t$ and hence are Poisson processes. 
Furthermore, $\Xi^1$ and $(\Xi^2 - \Xi) + \widetilde{\Psi}$ clearly do not have 
simultaneous jumps and hence they must be independent, using the arguments of 
\cite[Proposition XII-1.7]{RevuzYor-1999} or
\cite[Proposition 5.3]{ContTankov-2004}.
	
Set $N_t = \Upsilon([0,t])$. By construction this is a Poisson counting
process with intensity $2 \nu(\mathbb{R})$ for which 
\eqref{eq:cppRepresentation} holds with a sequence of $(\tau_i)_{i=1}^{\infty}$ 
obtained by counting the points of $\Upsilon$. Moreover, due to the independence of 
$\Xi^1$ and $(\Xi^2 - \Xi) + \widetilde{\Psi}$ (and, respectively, the 
independence of $\Xi^2$ and $(\Xi^1 - \Xi) + \widetilde{\Psi}$), it follows
that $\widetilde{Z}_j^1 = Z_j^1 \mathbf{1}_{\{ \tau_j = \tau_j^1 \}}$ and 
$\widetilde{Z}_j^2 = Z_j^2 \mathbf{1}_{\{ \tau_j = \tau_j^2 \}}$ for all $j \geq 
1$ have the distribution $\frac{1}{2}\delta_0 + \frac{1}{2\nu(\mathbb{R})} \nu$, 
since for any $u \in \mathbb{R}$
	\begin{equation}\label{eq:charFunctZtilde}
	\begin{split}
	&\mathbb{E}\left[ e^{i \langle u, \widetilde{Z}_j^k \rangle } \right] \quad=\quad
	\mathbb{E} \left[ e^{i \langle u, Z_j^k \mathbf{1}_{\{ \tau_j = \tau_j^k \}}\rangle }  \mathbf{1}_{\{\tau_j = \tau_j^k \}}\right] 
	+ \mathbb{E} \left[ e^{i \langle u, Z_j^k \mathbf{1}_{\{ \tau_j = \tau_j^k \}}\rangle }  \mathbf{1}_{\{\tau_j \neq \tau_j^k \}}\right] 
	\\
	&\quad=\quad \frac{1}{2}\mathbb{E} \left[ e^{i \langle u, Z_j^k \rangle }  \right] + \frac{1}{2} \mathbb{E} \left[ e^{i \langle u, 0\rangle }  \right] 
	\quad=\quad \frac{1}{2} \mathbb{E} \left[ e^{i \langle u, Z_j^k \rangle }\right] + \frac{1}{2} \,,
	\end{split}
	\end{equation}
	for all $j \geq 1$ and $k \in \{ 1 , 2 \}$, using the independence of 
$Z_j^k$ from both $\tau_j$ and $\tau_j^k$. Note that 
$\mathbb{P}(\tau_j = \tau_j^k) = \mathbb{P}(\tau_j \neq \tau_j^k) = 1/2$,
arising directly from the construction of 
$(N_t)_{t \geq 0}$.
	
	Note also that the mutual independence of $(\widetilde{Z}^1_j)_{j=1}^{\infty}$
	follows from the mutual independence of 
$(Z^1_j)_{j=1}^{\infty}$ combined with the mutual independence of the events $\{ 
\tau_j = \tau_j^1 \}$ for all $j \geq 1$. 
	
To show that the process $(N_t)_{t \geq 0}$ and the event $\{ \tau_j = \tau_j^1 \}$ are independent for any $j \geq 1$, 
we first show that $\widetilde{Z}_1^1 = Z_1^1 \mathbf{1}_{\{ \tau_1 = \tau_1^1 
\}}$ is independent of $\tau_1$. Then the assertion follows easily by induction. 
Note that for any $u_1$, $u_2 \in \mathbb{R}$,
\begin{equation}\label{eq:uniformizationAux}
	\begin{aligned}
	&\mathbb{E} \left[ e^{i\langle u_1, \tau_1 \rangle }   e^{i \langle u_2 , Z_1^1 \rangle}\right] 
	= \mathbb{E} \left[ e^{i\langle u_1, \tau_1 \rangle }   e^{i \langle u_2 , Z_1^1 \rangle} \mathbf{1}_{\{\tau_j = \tau_j^k \}} \right] 
+ \mathbb{E} \left[ e^{i\langle u_1, \tau_1 \rangle }   e^{i \langle u_2 , Z_1^1 \rangle}  \mathbf{1}_{\{\tau_j \neq \tau_j^k \}} \right] 
\\
	&\;=\; 2\times\mathbb{E} \left[ e^{i\langle u_1, \tau_1 \rangle }   e^{i \langle u_2 , Z_1^1 \rangle}  \mathbf{1}_{\{\tau_j = \tau_j^k \}} \right] \,,
	\end{aligned}
\end{equation}
	since $\mathbb{E} \left[ e^{i\langle u_1, \tau_1 \rangle }   e^{i \langle 
u_2 , Z_1^1 \rangle} \mathbf{1}_{\{\tau_j = \tau_j^k \}} \right] = \mathbb{E} 
\left[ e^{i\langle u_1, \tau_1 \rangle }   e^{i \langle u_2 , Z_1^1 \rangle} 
\mathbf{1}_{\{\tau_j \neq \tau_j^k \}} \right]$, following from the construction of 
$(N_t)_{t \geq 0}$ as a sum of two independent, identically distributed point 
processes. Hence for any $u_1$, $u_2 \in \mathbb{R}$ 
	\begin{equation*}
	\begin{split}
	&\mathbb{E} \left[ e^{i\langle u_1, \tau_1 \rangle }   e^{i \langle u_2 , \widetilde{Z}_1^1 \rangle}\right] 
	\quad=\quad \mathbb{E} \left[ e^{i\langle u_1, \tau_1 \rangle }   e^{i \langle u_2 , Z_1^1 \rangle} \mathbf{1}_{\{\tau_j = \tau_j^k \}} \right]
	+ \mathbb{E} \left[ e^{i\langle u_1, \tau_1 \rangle }   e^{i \langle u_2 , 0 \rangle} \mathbf{1}_{\{\tau_j \neq \tau_j^k \}} \right] 
	\\
	&\quad=\quad \frac{1}{2}\mathbb{E} \left[ e^{i\langle u_1, \tau_1 \rangle }   e^{i \langle u_2 , Z_1^1 \rangle} \right] + \mathbb{E} \left[ e^{i\langle u_1, \tau_1 \rangle } \mathbf{1}_{\{\tau_j \neq \tau_j^k \}} \right] 
	\\
	&\quad=\quad \frac{1}{2}\mathbb{E} \left[ e^{i\langle u_1, \tau_1 \rangle } \right] \mathbb{E} \left[ e^{i \langle u_2 , Z_1^1 \rangle} \right] + \frac{1}{2} \mathbb{E} \left[ e^{i\langle u_1, \tau_1 \rangle } \right] 
	\quad=\quad \mathbb{E} \left[ e^{i\langle u_1, \tau_1 \rangle } \right] \mathbb{E}\left[ e^{i \langle u, \widetilde{Z}_1^1 \rangle } \right] \,,
	\end{split}
	\end{equation*}
	where the second step uses \eqref{eq:uniformizationAux}, the third 
step uses the independence of $\tau_1$ and $Z_1^1$ for the first term and an 
argument analogous to \eqref{eq:uniformizationAux} for the second term, and 
the last step uses \eqref{eq:charFunctZtilde} for $j = k = 1$. This implies 
the independence of $\widetilde{Z}_1^1$ and $\tau_1$ and completes the proof.
\end{proof}

It is now possible to determine the optimal immersion coupling of two copies of a compound 
Poisson process with a L\'{e}vy measure $\nu$ having a strictly unimodal 
density. Based on Theorem \ref{thm:uniformization}, 
and possibly enriching the filtration using a marked Poisson process 
as in the statement of Theorem \ref{thm:uniformization},
any such 
coupling $(X_t, Y_t)_{t \geq 0}$ can be represented as in 
\eqref{CPPcouplingRepresentation}. This permits the interpretation 
of $(X_t, Y_t)_{t \geq 0}$ as a coupling of two processes always 
jumping simultaneously, albeit with a jump distribution which is different from the original, 
obtained by mixing the original with a Dirac mass at \(0\).

Given an immersion coupling $(X_t, Y_t)_{t \geq 0}$ as described above, with $|X_0 - Y_0| = a$,
define a Markov chain $(\bar{X}_k)_{k=0}^{\infty}$ derived from the jumps of $(X_t)_{t \geq 0}$ by requiring that
\begin{equation*}
\bar{X}_k \quad=\quad X_0 + \sum_{j=1}^{k} \widetilde{Z}_j^1
\end{equation*}
for $k \geq 0$. Based on $(\bar{X}_n)_{n=0}^{\infty}$, construct a Markov 
chain $(\bar{Z}_k^{0,a})_{k=0}^{\infty}$ starting from $Y_0$ which is 
step-by-step AMR-coupled to $(\bar{X}_n)_{n=0}^{\infty}$, as in the proof of 
Theorem \ref{thm:optimality}. 
Namely, set $\bar{Z}_0^{0,a} = Y_0$ (which 
implies $|\bar{X}_0 - \bar{Z}_0^{0,a}| = a$) and require that for any $k \geq 
1$ the random vector $(X_k - X_{k-1}, \bar{Z}_k^{0,a} - \bar{Z}_{k-1}^{0,a})$ is 
an AMR coupling defined as in 
\eqref{eq:skorokhod-def1}-\eqref{eq:skorokhod-construction}. 
Denote the jumps 
of $(\bar{Z}_k^{0,a})_{k=0}^{\infty}$ by
\begin{equation*}
\Delta \bar{Z}_k^{0,a} = \bar{Z}_k^{0,a} - \bar{Z}_{k-1}^{0,a}
\end{equation*}
for $k \geq 1$. Finally, 
also define a compound Poisson process $(Z_t^{0,a})_{t \geq 0}$ based on these 
jumps, with the same driving Poisson process as for $(X_t)_{t \geq 0}$, i.e.,
\begin{equation}\label{def:LevyAMRprocess}
Z_t^{0,a} \quad=\quad \bar{Z}_0^{0,a} + \sum_{k=1}^{N_t} \Delta \bar{Z}_k^{0,a} \,.
\end{equation}
Note that $(Z_t^{0,a})_{t \geq 0}$ is in fact constructed from $(X_t)_{t \geq 
0}$ by coupling all the jumps one-by-one using the AMR procedure. However, for 
our argument it is crucial that this procedure is applied to the extended jump 
distributions $\frac{1}{2\nu(\mathbb{R})}\nu + \frac{1}{2}\delta_0$ rather than 
just to $\frac{1}{\nu(\mathbb{R})}\nu$, cf.\ Figure \ref{figureAMRLevy}. From 
this construction it is the case that $|X_0 - Z_0^{0,a}| = a$ and, since the random 
variables $\Delta \bar{Z}_k^{0,a}$ and $\widetilde{Z}_k^2$ have the same law for 
each $k \geq 1$ (and $\bar{Z}_0^{0,a} = Y_0$), it follows that $(Z_t^{0,a})_{t \geq 
0}$ and $(Y_t)_{t \geq 0}$ have the same finite-dimensional distributions.

\begin{figure}
	\includegraphics{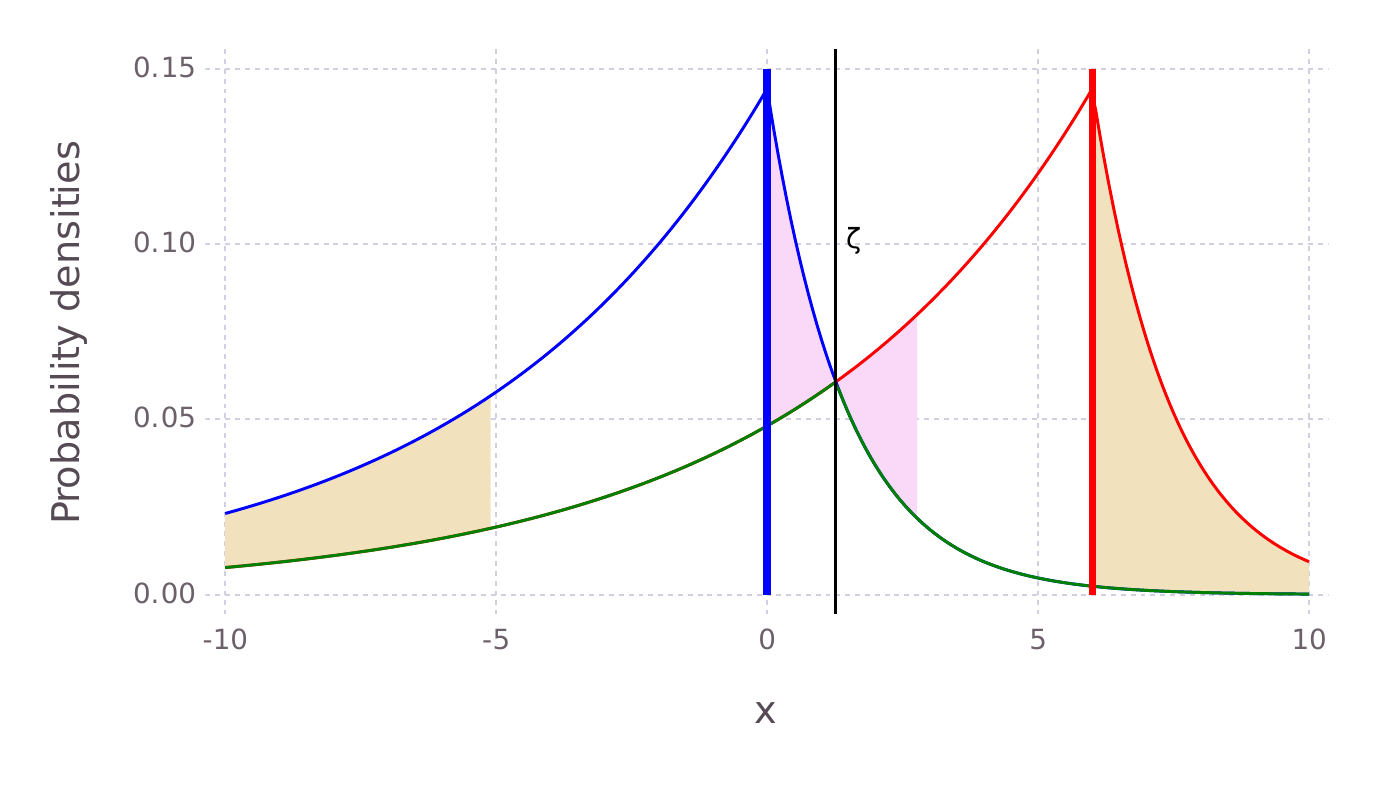}
	\caption{The anti-monotonic rearrangement in the case of a L\'{e}vy measure $\nu_1$ with finite support, having a non-symmetric unimodal density $f_1$ with added Dirac mass (represented by the bold line) at the mode (corresponding to the process $(X_t)_{t \geq 0}$) and its copy $\nu_2$ with density $f_2$ shifted to the right by $a > 0$ (corresponding to the process $(Z_t^{0,a})_{t \geq 0}$). The figure can be also interpreted as an illustration of the jump distribution of $(X_t)_{t \geq 0}$ (which is $\nu_1/\nu_1(\mathbb{R})$). Since the mass to the left of the mode of $f_1$ is larger than the mass to the right of the mode, some non-zero jumps of $(X_t)_{t \geq 0}$ to the left will be transformed into zero jumps of $(Z_t^{0,a})_{t \geq 0}$. 
		The underlying intuition is that when a unimodal probability density is mixed with
		a probability mass at the mode so that the mass at the mode is not exceeded by the mass of the resulting sub-probability density, then anti-monotonic rearrangement of the result with its translates never extends across the modes. Thus the white subset of the left region is transported to the right atom, while the white subset of the right region is transported to the left atom.
		\label{figureAMRLevy}}
\end{figure}

We can now prove the optimality result for the AMR coupling of finite-activity L\'{e}vy processes.
Note that it is expressed in terms of maximization of convex payoffs, 
but it equivalently solves the problem of minimization of concave costs, as 
explained in the discussion below the proof of Lemma \ref{lem:generalConvexity}.
\begin{theorem}\label{thm:LevyOptimality}
	Let $(X_t)_{t \geq 0}$ be a finite-activity L\'{e}vy process with a L\'{e}vy 
measure with strictly unimodal density and let $a > 0$. Let $(X_t, Z_t^{0,a})_{t 
\geq 0}$ be the AMR coupling defined by \eqref{def:LevyAMRprocess} and let 
$\phi:[0,\infty)\to[0,\infty)$ be a convex bounded continuous decreasing function.
Then it is the case that
	\begin{equation*}
	\sup \left\{\mathbb{E}\left[\phi(|X_t - Y_t|)\;;\; |X_0-Y_0|=a
	\text{ and } (X,Y) \text{ is an immersion coupling}\right] \right\}
	\,=\, \mathbb{E} \left[\phi(|X_t - Z_t^{0,a}|)\right] \,.
	\end{equation*} 
\end{theorem}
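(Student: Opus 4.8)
The plan is to reduce the continuous-time optimization to the discrete-time result of Theorem~\ref{thm:optimality} by conditioning on the common Poisson clock produced by the uniformization of Theorem~\ref{thm:uniformization}. Write $\lambda = \nu(\mathbb{R})$ and let $q = \frac{1}{2}\delta_0 + \frac{1}{2\lambda}\nu$ be the extended jump law. Since the density of $\nu$ is strictly unimodal at $0$, the law $q$ is a mixture of a Dirac mass at $0$ with a density that is strictly unimodal at $0$, so the Markov chain with transition kernel $p(x,\cdot) = \delta_x * q$ is exactly of the type treated in Theorem~\ref{thm:optimality}. Let $\psi^{\phi}_n(a)$ denote the supremum of $\mathbb{E}[\phi(|\bar X_n - \bar Y_n|)]$ over all immersion couplings of two copies of this $q$-chain started at distance $a$ apart; by Theorem~\ref{thm:optimality} this supremum is realized at every level $n$ by the step-by-step AMR coupling.

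First I would fix an arbitrary immersion coupling $(X_t, Y_t)_{t \ge 0}$ of two copies of the L\'evy process with $|X_0 - Y_0| = a$, enrich the common filtration $(\mathcal{F}_t)$ as permitted in Theorem~\ref{thm:uniformization}, and apply that theorem to obtain a rate-$2\lambda$ Poisson process $(N_t)$, with jump times $(\tau_k)_{k \ge 1}$, together with i.i.d.\ sequences $(\widetilde Z_k^1)$, $(\widetilde Z_k^2)$ of common law $q$, such that $X_t = X_0 + \sum_{k \le N_t} \widetilde Z_k^1$, $Y_t = Y_0 + \sum_{k \le N_t} \widetilde Z_k^2$, and $(N_t)$ is independent of the joint skeleton $((\widetilde Z_k^1, \widetilde Z_k^2))_{k \ge 1}$. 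Put $\bar X_k = X_0 + \sum_{j \le k} \widetilde Z_j^1$, $\bar Y_k = Y_0 + \sum_{j \le k} \widetilde Z_j^2$ and $\mathcal{G}_k = \mathcal{F}_{\tau_k}$. The key claim is that $(\bar X_k, \bar Y_k)_{k \ge 0}$ is an immersion coupling of two copies of the $q$-chain for the discrete filtration $(\mathcal{G}_k)$; this reduces to showing that $\widetilde Z^1_{k+1}$ and $\widetilde Z^2_{k+1}$ are each independent of $\mathcal{G}_k$, which I would deduce from the strong Markov property enjoyed by $X$ and by $Y$ as L\'evy processes in the immersed filtration $(\mathcal{F}_t)$, combined with the independence of $(\mathcal{F}_t)$ from the auxiliary randomization ($\widetilde\Psi$ and the marks $M_i$) introduced in the uniformization. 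I expect this verification — following the uniformized increments $\widetilde Z^j_{k+1}$ across the $\mathcal{F}$-stopping times $\tau_k$ and checking both their law and the conditional independence — to be the main obstacle, since it is exactly here that the continuous-time immersion hypothesis is converted into a discrete-time one.

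Granting the claim, conditioning on $N_t$ and using the independence of $(N_t)$ from the skeleton gives, for every immersion coupling $(X,Y)$ as above,
\begin{equation*}
\mathbb{E}\big[\phi(|X_t - Y_t|)\big]
\;=\; \sum_{n=0}^{\infty} \mathbb{P}(N_t = n)\, \mathbb{E}\big[\phi(|\bar X_n - \bar Y_n|)\big]
\;\le\; \sum_{n=0}^{\infty} \mathbb{P}(N_t = n)\, \psi^{\phi}_n(a),
\end{equation*}
the inequality being Theorem~\ref{thm:optimality} applied level by level; note that the right-hand side is the same for every coupling, since $(N_t)$ is a rate-$2\lambda$ Poisson process whatever the coupling is. For the AMR process $(Z_t^{0,a})$ of \eqref{def:LevyAMRprocess} the identical conditioning applies, because $(Z_t^{0,a})$ is driven by the same clock $(N_t)$ as $X$ and its skeleton $(\bar Z_k^{0,a})$ is, by construction, the step-by-step AMR coupling of the $q$-chain started from $X_0 + a$; hence $\mathbb{E}[\phi(|\bar X_n - \bar Z_n^{0,a}|)] = \psi^{\phi}_n(a)$ by Theorem~\ref{thm:optimality}, and so
\begin{equation*}
\mathbb{E}\big[\phi(|X_t - Z_t^{0,a}|)\big] \;=\; \sum_{n=0}^{\infty} \mathbb{P}(N_t = n)\, \psi^{\phi}_n(a).
\end{equation*}

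Finally I would record the reverse inequality. The pair $(X_t, Z_t^{0,a})$ is itself an immersion (indeed Markovian) coupling — it is obtained from the jumps of $X$ by applying AMR step by step, exactly as in the proof of Theorem~\ref{thm:optimality} — and it has the correct marginals: the skeleton increments $\Delta \bar Z_k^{0,a}$ are i.i.d.\ of law $q$ and independent of $(N_t)$, so $(Z_t^{0,a})$ is a copy of the L\'evy process started from $X_0 + a$, with $|X_0 - Z_0^{0,a}| = a$. Therefore $(X, Z^{0,a})$ belongs to the class of immersion couplings over which the supremum in the statement is taken, and the two displays above combine to give
\begin{equation*}
\mathbb{E}\big[\phi(|X_t - Z_t^{0,a}|)\big] \;=\; \sup\big\{\, \mathbb{E}[\phi(|X_t - Y_t|)] \;:\; |X_0 - Y_0| = a,\ (X,Y) \text{ an immersion coupling} \,\big\},
\end{equation*}
which is the assertion.
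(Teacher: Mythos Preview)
Your proof is correct and follows essentially the same route as the paper: apply the uniformization of Theorem~\ref{thm:uniformization}, condition on $N_t$, and invoke Theorem~\ref{thm:optimality} for the discrete skeleton. You are in fact more careful than the paper on one point, namely the verification that the skeleton $(\bar X_k,\bar Y_k)$ is a discrete-time immersion coupling of the $q$-chain, which the paper's proof uses tacitly when it applies Theorem~\ref{thm:optimality}.
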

\begin{proof}For any convex, bounded, continuous, decreasing function $\phi$ and 
for any immersion coupling $(X_t, Y_t)_{t \geq 0}$ given by 
\eqref{CPPcouplingRepresentation},
	\begin{equation*}
	\begin{split}
	\mathbb{E} \left[\phi(|X_t - Y_t|)\right] 
	\quad&=\quad \sum_{n=0}^{\infty} \mathbb{E} \left[ \phi \left( \left| (X_0 - Y_0) + \sum_{j=1}^{N_t} (\widetilde{Z}_j^1 - \widetilde{Z}_j^2) \right| \right) \Bigg| N_t = n \right] \mathbb{P}[N_t = n] 
	\\
	\quad&=\quad \sum_{n=0}^{\infty} \mathbb{E} \left[\phi \left(\left| (X_0-Y_0) +  \sum_{j=1}^{n} (\widetilde{Z}_j^1 - \widetilde{Z}_j^2) \right| \right) \right]\mathbb{P}[N_t = n]
	\\
	\quad&\leq\quad \sum_{n=0}^{\infty} \mathbb{E} \left[\phi ( |\bar{X}_n - \bar{Z}^{0,a}_n| )\right] \mathbb{P}[N_t = n]
	\quad=\quad \mathbb{E} \left[\phi(|X_t - Z_t^{0,a}|)\right] \,,
	\end{split}
	\end{equation*} 
	where the inequality uses the optimality result for the discrete-time case 
(Theorem \ref{thm:optimality}). Taking the supremum on the left hand side over 
all immersion couplings $(X_t, Y_t)_{t \geq 0}$ as described above, it follows that 
that $(X_t, Z_t^{0,a})_{t \geq 0}$ is the optimal Markovian coupling.
\end{proof}

Finally, we stress that the optimal Markovian coupling for compound Poisson 
processes constructed above is unique. Indeed, if we take another optimal 
Markovian coupling $(\hat{X}_t, \hat{Y}_t)_{t \geq 0}$ given by
\begin{equation*}
\hat{X}_t = X_0 + \sum_{i=1}^{\hat{N}_t} \hat{Z}_i^1 \qquad \text{ and } \qquad \hat{Y}_t = Y_0 + \sum_{i=1}^{\hat{N}_t} \hat{Z}_i^2 \,,
\end{equation*}
with a Poisson process $(\hat{N}_t)_{t \geq 0}$ and i.i.d.\ sequences $(\hat{Z}_i^1)_{i=1}^{\infty}$ and  $(\hat{Z}_i^2)_{i=1}^{\infty}$ obtained in an analogous way as $(N_t)_{t \geq 0}$, $(\widetilde{Z}_i^1)_{i=1}^{\infty}$ and  $(\widetilde{Z}_i^2)_{i=1}^{\infty}$ above, respectively, then by the argument in the proof of Theorem  \ref{thm:LevyOptimality} and by the uniqueness of the optimal coupling for Markov chains, we conclude that for each $i \geq 1$ the random variables $\hat{Z}_i^1$ and $\hat{Z}_i^2$ have to be coupled via the AMR procedure. This gives uniqueness in law of the optimal Markovian coupling for continuous-time finite-activity L\'{e}vy processes.

To conclude, we briefly discuss two contrasting continuous-time examples.

\begin{example}
	Consider the case of a finite activity L\'{e}vy process with symmetric 
jumps. The above arguments show the optimal coupling is a ``reflection 
coupling'': the jumps of one process are the opposite of the jumps of the other 
till they both cross the midpoint between the starting points. At that crossover 
time the ``left'' process jumps to the location of the ``right'' process and 
coupling occurs. By arguments using the so-called recognition lemma \cite[Lemma 20]{ErnstKendallRobertsRosenthal-2019}, 
this coupling is maximal amongst all couplings, immersion or not.  
\end{example}

\begin{example}
	Consider the case of a finite activity L\'{e}vy process with exponential 
jumps (hence the jumps are always positive).
	The above arguments show that, in the optimal coupling with starting points 
$x<y$, the two coupled processes jump independently according to independent 
Poisson processes, till the first time the ``left'' process jumps past the 
``right'' process. At that crossover time the ``right'' process is forced to 
jump to the location of the ``left'' process. However, arguments using 
the so-called recognition lemma \cite[Lemma 20]{ErnstKendallRobertsRosenthal-2019} show that this coupling cannot be maximal.  
\end{example}

\noindent \textbf{Acknowledgements.} 
This is a theoretical research paper and, as such, no new data were created during this study. 
For the purpose of open access, the authors have applied a Creative Commons 
Attribution (CC BY) licence to any Author Accepted Manuscript version arising. 
The first author acknowledges support by the EPSRC grant EP/K013939. A substantial part of this work was completed while the second author was affiliated to the University of Warwick and supported by the EPSRC grant  EP/P003818/1. 
The third author was supported by the EPSRC grants EP/P003818/1 \& EP/V009478/1.
The first and third authors were also supported by The 
Alan Turing Institute under the EPSRC grant EP/N510129/1 and The Turing Institute Programme on Data-Centric Engineering funded by the Lloyd's Register Foundation.

\bibliographystyle{abbrv}
\bibliography{LevyCoupling_final}

\end{document}